\newtheorem{definition}{Definition}
\newtheorem{lemma}{Lemma}
\newtheorem{theorem}{Theorem}
\newtheorem{proposition}{Proposition}
\title{\LARGE \bf
Optimization Landscape of Gradient Descent for Discrete-time Static Output Feedback
}
\author{Jingliang Duan, Jie Li, Shengbo Eben Li, Lin Zhao
\thanks{J. Duan and L. Zhao  are with the Department of Electrical and Computer Engineering, National University of Singapore, Singapore. {\tt\small Email: (duanjl,elezhli)@nus.edu.sg}.
}
\thanks{J. Li and S. E. Li are with the School of Vehicle and Mobility, Tsinghua University, Beijing, 100084, China. {\tt\small Email: jie-li18@mails.tsinghua.edu.cn, lishbo@mail.tsinghua.edu.cn}.
}
}
\begin{document}

\maketitle
\thispagestyle{empty}
\pagestyle{empty}

\begin{abstract}
In this paper, we analyze the optimization landscape of gradient descent methods for static output feedback (SOF) control of discrete-time linear time-invariant systems with quadratic cost. The SOF setting can be quite common, for example, when there are unmodeled hidden states in the underlying process. We first establish several important properties of the SOF cost function, including coercivity, $L$-smoothness, and $M$-Lipschitz continuous Hessian. We then utilize these properties to show that the gradient descent is able to converge to a stationary point at a dimension-free rate. Furthermore, we prove that under some mild conditions, gradient descent  converges linearly to a local minimum if the starting point is close to one. These results not only characterize the performance of gradient descent in optimizing the SOF problem, but also shed light on the efficiency of general policy gradient methods in reinforcement learning.
\end{abstract}


%
\IEEEpeerreviewmaketitle

\section{Introduction}
%
%
%
%

Reinforcement learning (RL) has achieved human-level performance in a variety of challenging domains, from games to robotic control \cite{mnih2015DQN,silver2016mastering,lillicrap2015DDPG, duan2021distributional,duan2020hierarchical}.  For continuous control settings, policy gradient based RL algorithms such as DDPG \cite{lillicrap2015DDPG}, TRPO\cite{schulman2015TRPO}, PPO\cite{schulman2017PPO}, SAC\cite{Haarnoja2018SAC}, DSAC\cite{duan2021distributional}, are widely used to find the nearly optimal parameterized policy function. Despite their success in various applications, the theoretical understandings of their performance and computational complexity are rather limited, even in the most basic settings such as linear systems. 

Recent literature investigated the optimization landscape and the global convergence of policy gradient methods when applied to the linear quadratic regulator (LQR). It is well known that directly optimizing the LQR cost with respect to the linear policy is generally a nonconvex optimization. The domain of stabilizing linear policy can be nonconvex with non-smooth boundaries. As a pioneering work, Fazel \emph{et al.} (2018) demonstrated the gradient dominance and the almost smoothness properties of the LQR cost in a discrete-time setting, leading to the global convergence at a linear rate for policy gradient methods\cite{fazel2018global}.  This work has inspired a line of subsequent studies that examined similar properties through the lens of policy gradient methods for discrete-time and continuous-time LQR \cite{bu2019lqr,bhandari2019global,mohammadi2019CT-LQR,fatkhullin2020CTSOF}, finite-horizon noisy LQR \cite{hambly2020noisyLQR}, LQR tracking \cite{ren2021lqrtracking},  Markovian jump LQR \cite{jansch2020Mjump}, distributed LQR \cite{furieri2020distributedLQR}, linear $H_2$ control with $H_{\infty}$ constraints \cite{zhang2020robust}, as well as linear quadratic Gaussian problem \cite{tang2021LQG,zheng2021sampleLQG}.

The aforementioned literature mainly focuses on the full state feedback or dynamic output feedback control, whereas the convergence of the gradient method for static output feedback (SOF) LQR is barely studied. SOF is one of the most important open questions in control theory, which considers a linear time-invariant system with quadratic performance in
the case when only some outputs (linear functions of the state) instead of the complete state are available \cite{syrmos1997SOF}. Note that state feedback LQR can be regarded as a special case of SOF. Compared with state feedback LQR, the optimization landscape for SOF LQR is much more complicated. The domain of stabilizing SOF control gains can be disconnected and the stationary points in each component of the domain can be local minima, saddle points, or even local maxima \cite{fatkhullin2020CTSOF,feng2020connectivity,bu2019topological,gryazina2008d}. The recent work \cite{fatkhullin2020CTSOF} analyzed the convergence rate to stationary points, which however only focused on continuous-time linear systems. 

This paper aims to analyze the optimization landscape of the discrete-time SOF. The contributions are summarized as follows.
\begin{enumerate}
    \item We establish several important properties of the SOF cost function, including coercivity, $L$-smoothness, and $M$-Lipschitz continuous Hessian. These results enable further convergence analysis of policy gradient for discrete-time SOF.
    \item  For the partially observed case, results on convergence to stationary points and the corresponding dimension-free convergence rate are formally established. Moreover, utilizing the Lipschitz continuity of  Hessian, we demonstrate that under mild conditions, gradient descent converges linearly to a local minimum if the starting point is close to one.
    \item  For the fully observed case, we show that gradient descent converges to the global optimal controller at a linear rate if an initial stabilizing controller is used. Compared with the results given in \cite{fazel2018global}, we provide a more explicit characterization of computational complexity. 
\end{enumerate}

The remainder of this paper is organized as follows. In Section \ref{sec:preliminary}, we describe some preliminaries of SOF. Section \ref{sec.gradient_property} derives the formula of the gradient and establishes several key properties of the SOF cost function. Section \ref{sec:convergence} shows the convergence results of gradient descent for SOF, and Section \ref{sec:conclusion} concludes this article.

\textbf{Notation:}
$\|X\|$ and $\rho(X)$ denote the spectral norm and spectral radius of a matrix $X$, respectively; ${\rm Tr}(X)$, $\sigma_{\rm min}(X)$, and $\lambda_{\rm min}(X)$ denote the trace, minimal singular value, and minimal eigenvalue of a square matrix, respectively. $\partial \mathbb{X}$ denotes the boundary of a set $\mathbb{X}$. $\mathrm{vec}(X)$ denotes the vectorization of  matrix $X$. $X \succ Y$ and $X \succeq Y$ represent that $X-Y$ is positive definite and positive semidefinite, respectively.

\section{Preliminaries}
\label{sec:preliminary}
Consider the discrete-time time-invariant linear system 
\begin{equation}   
\label{eq.statefunction}
\begin{aligned}
x_{t+1} &= Ax_t+Bu_t,\\
y_t &= Cx_t,
\end{aligned}
\end{equation}
with state $x$, output $y$, and matrices $A\in \mathbb{R}^{n\times n}$, $B\in \mathbb{R}^{n\times m}$, $C\in \mathbb{R}^{d\times n}$. Without loss of generality, we assume $C$ has full row rank. The objective of infinite-horizon LQR performance is given by 
\begin{equation}   
\label{eq.objective}
\mathbb{E}_{x_0\sim \mathcal{D}}\Big[\sum_{t=0}^{\infty}(x_t^\top Qx_t+u_t^\top Ru_t) \Big],
\end{equation}
where initial state $x_0$ is assumed to be randomly distributed according to a distribution $\mathcal{D}$, and $Q\in \mathbb{R}^{n\times n}$ and $R\in \mathbb{R}^{m\times m}$ are positive definite matrices.  

The static output feedback is described by the form
\begin{equation}   
\label{eq.sop}
u_t=-Ky_t,
\end{equation} 
where $K\in \mathbb{R}^{m\times d}$. Then the closed loop system can be written as 
\begin{equation}   
\label{eq.closed-loop-system}
x_{t+1} = (A-BKC)x_t,
\end{equation}
and the objective function becomes
\begin{equation}   
\label{eq.objective_with_K)}
J(K) = \mathbb{E}_{x_0\sim \mathcal{D}}\Big[\sum_{t=0}^{\infty}x_t^\top (Q+C^\top K^\top R K C )x_t \Big].
\end{equation}
We assume that a stabilizing controller exists. We denote the feasible set which contains all stabilizing  $K$ as $\mathbb{K}$, that is 
\begin{equation}
\nonumber
\mathbb{K}=\{K\in \mathbb{R}^{m\times d}:\rho(A-BKC)<1\}.
\end{equation}

Define $P_K$ as the solution to the Lyapunov equation
\begin{equation}   
\label{eq.lyapunov_equation}
P_K = Q + C^\top K^\top RKC+(A-BKC)^\top P_K(A-BKC),
\end{equation}
and then the cost function can be expressed by
\begin{equation} 
\label{eq.cost_in_P}
J(K)= \mathbb{E}_{x_0\sim \mathcal{D}}x_0^\top P_Kx_0 = {\rm Tr}(P_KX_0),
\end{equation}
where 
\begin{equation}
\nonumber
X_0 :=  \mathbb{E}_{x_0\sim \mathcal{D}}x_0x_0^\top.
\end{equation}
Define the state correlation matrix as 
\begin{equation}
\nonumber
\Sigma_K:=\mathbb{E}_{x_0\sim \mathcal{D}}\sum_{t=0}^{\infty}x_t x_t^\top,
\end{equation}
then it is straightforward that
\begin{equation}
\nonumber
\mu :=  \sigma_{\rm min}(X_0)\
\le \sigma_{\rm min}(\Sigma_K).
\end{equation}
It is also well-known that $\Sigma_K$ satisfies the following Lyapunov equation
\begin{equation}   
\label{eq.sigma_lyapunov_equation}
\Sigma_K = X_0+(A-BKC) \Sigma_K(A-BKC)^\top.
\end{equation}

\section{Gradient and Cost Properties}
\label{sec.gradient_property}
In this section, we first derive the formula of the gradient and Hessian, and then establish several key properties of the SOF cost. 

\subsection{Gradient and Second Derivative}
\label{sec.gradient}

\begin{lemma}[Policy Gradient Expression]
\label{lemma:gradient}
 For $\forall  K \in \mathbb{K}$, the policy gradient is 
\begin{equation} 
\label{eq.gradient}
\nabla J(K) = 2E_K\Sigma_KC^\top,
\end{equation}
where $E_K = (R+B^\top P_K B)KC-B^\top P_KA$.
\end{lemma}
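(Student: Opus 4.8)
The plan is to compute the Fréchet derivative of $J$ by implicitly differentiating the Lyapunov equation \eqref{eq.lyapunov_equation} that defines $P_K$, since $J(K)=\mathrm{Tr}(P_KX_0)$ depends on $K$ only through $P_K$. Writing $A_K:=A-BKC$ for brevity and perturbing $K\mapsto K+\mathrm{d}K$, I would take the differential of both sides of \eqref{eq.lyapunov_equation}. The quadratic-in-$K$ term contributes $C^\top(\mathrm{d}K)^\top RKC+C^\top K^\top R(\mathrm{d}K)C$, while the closed-loop term contributes $A_K^\top(\mathrm{d}P_K)A_K$ together with the cross terms arising from $\mathrm{d}A_K=-B(\mathrm{d}K)C$. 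Collecting these shows that $\mathrm{d}P_K$ itself satisfies a Lyapunov equation $\mathrm{d}P_K=A_K^\top(\mathrm{d}P_K)A_K+S$, where $S$ is the linear-in-$\mathrm{d}K$ source term.

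The key step is then to evaluate $\mathrm{d}J=\mathrm{Tr}\big((\mathrm{d}P_K)X_0\big)$ without solving for $\mathrm{d}P_K$ explicitly. I would use the series solution $\mathrm{d}P_K=\sum_{t\ge0}(A_K^\top)^tSA_K^t$ together with the matching series $\Sigma_K=\sum_{t\ge0}A_K^tX_0(A_K^\top)^t$ implied by \eqref{eq.sigma_lyapunov_equation} to obtain the adjoint identity $\mathrm{Tr}\big((\mathrm{d}P_K)X_0\big)=\mathrm{Tr}(S\Sigma_K)$. This duality between the two Lyapunov equations is the crux of the argument, and it is exactly where $K\in\mathbb{K}$, i.e. $\rho(A_K)<1$, is needed to guarantee convergence of the series and to justify interchanging the trace with the summation.

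Finally I would expand $\mathrm{Tr}(S\Sigma_K)$ term by term and repeatedly apply the cyclic property of the trace together with the symmetry of $R$ and $\Sigma_K$. The two symmetric contributions from the quadratic part collapse to $2\,\mathrm{Tr}\big((\mathrm{d}K)^\top RKC\Sigma_KC^\top\big)$, and the two cross terms collapse to $-2\,\mathrm{Tr}\big((\mathrm{d}K)^\top B^\top P_KA_K\Sigma_KC^\top\big)$. Substituting $A_K=A-BKC$ so that $B^\top P_KA_K=B^\top P_KA-B^\top P_KBKC$ and regrouping yields $\mathrm{d}J=2\,\mathrm{Tr}\big((\mathrm{d}K)^\top E_K\Sigma_KC^\top\big)$ with $E_K=(R+B^\top P_KB)KC-B^\top P_KA$. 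Reading off the gradient from $\mathrm{d}J=\langle\nabla J(K),\mathrm{d}K\rangle$ then gives $\nabla J(K)=2E_K\Sigma_KC^\top$. The only genuine obstacle is the bookkeeping in establishing the adjoint identity and keeping the transpose manipulations straight; once those are in place, the remaining simplifications are routine matrix calculus.
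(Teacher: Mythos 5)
Your proof is correct, but it takes a genuinely different route from the paper's. The paper proves Lemma~\ref{lemma:gradient} in a trajectory/value-function style: it defines $V_K(x_t)=x_t^\top P_K x_t$, differentiates the Bellman-type identity $V_K(x_0)=x_0^\top(Q+C^\top K^\top RKC)x_0+V_K((A-BKC)x_0)$ with respect to $K$, and unrolls the resulting recursion for $\nabla V_K$ along the closed-loop trajectory, so that $\Sigma_K$ emerges as $\mathbb{E}\sum_t x_tx_t^\top$ after taking the expectation over $x_0$. You instead differentiate the Lyapunov equation \eqref{eq.lyapunov_equation} implicitly (writing $A_K:=A-BKC$), obtain the Lyapunov equation $\mathrm{d}P_K=A_K^\top(\mathrm{d}P_K)A_K+S$ for the differential, and pass from $\mathrm{Tr}\big((\mathrm{d}P_K)X_0\big)$ to $\mathrm{Tr}(S\Sigma_K)$ via the series duality between the $P$-type and $\Sigma$-type Lyapunov equations; your trace bookkeeping checks out (the symmetric terms give $2\,\mathrm{Tr}\big((\mathrm{d}K)^\top RKC\Sigma_KC^\top\big)$, the cross terms $-2\,\mathrm{Tr}\big((\mathrm{d}K)^\top B^\top P_KA_K\Sigma_KC^\top\big)$, and regrouping via $B^\top P_KA_K=B^\top P_KA-B^\top P_KBKC$ yields $2\,\mathrm{Tr}\big((\mathrm{d}K)^\top E_K\Sigma_KC^\top\big)$). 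Interestingly, your machinery is exactly what the paper itself deploys one lemma later for the Hessian: equation \eqref{eq.P_derivative} is your differentiated Lyapunov equation for $P'_K[Z]$, and the adjoint identity $\mathrm{Tr}\big(\sum_j(A_K^\top)^j S\,A_K^j\,X_0\big)=\mathrm{Tr}(S\Sigma_K)$ is invoked there verbatim. What your route buys: it makes explicit where $\rho(A_K)<1$ enters (convergence of both series and the interchange of trace and summation), it justifies differentiability of $K\mapsto P_K$ through the convergent series or the implicit function theorem rather than leaving it implicit, and it extends mechanically to higher derivatives. What the paper's route buys: it parallels the standard policy-gradient derivation in the reinforcement-learning literature (cf.\ the state-feedback case in \cite{fazel2018global}) and exposes $\Sigma_K$ directly as the state correlation accumulated along trajectories, which is the quantity estimated in model-free settings.
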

\begin{proof}
Define the value function of state $x_t$ as 
\begin{equation}
\nonumber
V_K(x_t): = x_t^\top P_Kx_t.
\end{equation}
From \eqref{eq.lyapunov_equation}, it follows that 
\begin{equation}
\nonumber
\begin{aligned}
V_K(x_0) =& x_0^\top P_Kx_0\\
=& x_0^\top (Q + C^\top K^\top RKC)x_0\\
&+x_0^\top(A-BKC)^\top P_K(A-BKC)x_0\\
=&x_0^\top (Q + C^\top K^\top RKC)x_0+V_K((A-BKC)x_0).
\end{aligned}
\end{equation}
Taking the gradient of $V_K(x_0)$ w.r.t. $K$, one has
\begin{equation}
\nonumber
\begin{aligned}
\nabla V_K(x_0) =& 2RKC x_0x_0^\top C^\top\\
&-2B^\top P_K(A-BKC)x_0x_0^\top C^\top\\
&+x_1^\top \nabla P_K x_1\big|_{x_1=(A-BKC)x_0}\\
=& 2[(R+B^\top P_K B)KC-B^\top P_K A] x_0x_0^\top C^\top\\
&+\nabla V_K(x_1)\big|_{x_1=(A-BKC)x_0}\\
=& 2[(R+B^\top P_K B)KC-B^\top P_K A] \sum_{t=0}^{\infty}(x_tx_t^\top) C^\top .
\end{aligned}
\end{equation}
Then, we can observe that 
\begin{equation}  
\nonumber
\nabla J(K) = \mathbb{E}_{x_0\sim \mathcal{D}}\nabla V_K(x_0)=2E_K\Sigma_KC^\top.
\end{equation}
\end{proof}

The SOF cost $J(K)$ is twice differentiable. To avoid tensors, we analyze the Hessian of the SOF cost $J(K)$ applied to a nonzero direction $Z\in \mathbb{R}^{m\times d}$, which is given as 
\begin{equation}  
\label{eq.quadaratic_of_hessian}
\begin{aligned}
\nabla^2J(K)[Z,Z]:&=\frac{d^2}{d\eta^2}\Big|_{\eta=0}J(K+\eta Z)\\
&= \mathbb{E}_{x_0\sim \mathcal{D}}x_0^\top \frac{d^2}{d\eta^2}\Big|_{\eta=0}P_{K+\eta Z}x_0\\
&={\rm Tr}(\frac{d^2}{d\eta^2}\Big|_{\eta=0}P_{K+\eta Z}X_0).
\end{aligned}
\end{equation}

\begin{lemma}
\label{lamma:hessian}
For $\forall  K \in \mathbb{K}$, the Hessian of the SOF cost $J(K)$ applied to a direction $Z\in \mathbb{R}^{m\times d}$ satisfies 
\begin{equation}  
\label{eq.Hessian formular}
\begin{aligned}
\nabla^2J(K)[Z,Z]&=2{\rm Tr}(C^\top Z^\top (R+B^TP_KB)ZC \Sigma_K)\\
&\qquad - 4{\rm Tr}((BZC)^\top P'_K[Z](A-BKC) \Sigma_K).
\end{aligned}
\end{equation}
where 
\begin{equation} 
\label{eq.derivative_P}
\begin{aligned}
P'_K&[Z]=\\
&\sum_{j=0}^{\infty}{(A-BKC)^\top}^j(C^\top Z^\top E_K  + E_K^\top ZC)(A-BKC)^j.
\end{aligned}
\end{equation}
\end{lemma}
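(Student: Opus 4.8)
The plan is to obtain the Hessian by differentiating the Lyapunov equation \eqref{eq.lyapunov_equation} twice along $Z$ and then taking the trace against $X_0$, in direct analogy with the first-order gradient computation. Throughout write $A_K := A-BKC$ and set $P_\eta := P_{K+\eta Z}$; the closed-loop matrix associated with $K+\eta Z$ is then $A_K-\eta BZC$, whose first $\eta$-derivative is the constant $-BZC$ and whose second derivative vanishes -- a fact that keeps the bookkeeping manageable.

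First I would recover \eqref{eq.derivative_P}. Differentiating \eqref{eq.lyapunov_equation} once and evaluating at $\eta=0$ gives a discrete Lyapunov-type identity
\[
P'_K[Z]-A_K^\top P'_K[Z]A_K = C^\top Z^\top RKC + C^\top K^\top RZC - (BZC)^\top P_K A_K - A_K^\top P_K (BZC).
\]
Using $C^\top Z^\top B^\top=(BZC)^\top$ and the definition $E_K=(R+B^\top P_K B)KC-B^\top P_K A$, a short algebraic check shows the right-hand side equals $C^\top Z^\top E_K + E_K^\top ZC$; solving the identity by iteration (legitimate since $\rho(A_K)<1$ on $\mathbb K$) then reproduces \eqref{eq.derivative_P}.

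The core step is the second differentiation. Applying the product rule to $A_\eta^\top P_\eta A_\eta$ and collecting terms at $\eta=0$ yields another Lyapunov-type equation $P''_0 - A_K^\top P''_0 A_K = \widetilde W$, where $P''_0 := \tfrac{d^2}{d\eta^2}\big|_{\eta=0}P_{K+\eta Z}$ and, after using $C^\top Z^\top RZC+(BZC)^\top P_K(BZC)=C^\top Z^\top(R+B^\top P_K B)ZC$,
\[
\widetilde W = 2\,C^\top Z^\top (R+B^\top P_K B)ZC - 2\,(BZC)^\top P'_K[Z] A_K - 2\,A_K^\top P'_K[Z](BZC).
\]
Iterating gives $P''_0=\sum_{j=0}^{\infty}(A_K^\top)^j\widetilde W A_K^j$, and by \eqref{eq.quadaratic_of_hessian} we have $\nabla^2 J(K)[Z,Z]={\rm Tr}(P''_0 X_0)$.

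Finally I would move the trace inside the sum: the cyclic property together with $\Sigma_K=\sum_{j=0}^{\infty}A_K^j X_0 (A_K^\top)^j$, which follows from \eqref{eq.sigma_lyapunov_equation}, gives ${\rm Tr}(P''_0 X_0)={\rm Tr}(\widetilde W\Sigma_K)$. The term $2\,C^\top Z^\top(R+B^\top P_K B)ZC$ yields the first summand of \eqref{eq.Hessian formular} directly, while the two cross terms ${\rm Tr}\big((BZC)^\top P'_K[Z]A_K\Sigma_K\big)$ and ${\rm Tr}\big(A_K^\top P'_K[Z](BZC)\Sigma_K\big)$ are shown to be equal by taking the transpose inside the trace and invoking the symmetry of $P'_K[Z]$ and $\Sigma_K$; this merges $-2-2$ into the factor $-4$ and gives \eqref{eq.Hessian formular}. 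I expect the main obstacle to be the term-grouping in the second derivative of $A_\eta^\top P_\eta A_\eta$, which spawns several cross terms that must be combined correctly, and, secondarily, the transpose/symmetry argument that collapses the two cross terms, since it is precisely that step that produces the exact coefficient $4$ rather than $2$.
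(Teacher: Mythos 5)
Your proposal is correct and follows essentially the same route as the paper's proof: differentiate the Lyapunov equation \eqref{eq.lyapunov_equation} once to obtain $P'_K[Z]$ in $E_K$-form, differentiate again to get the discrete Lyapunov identity whose right-hand side (your $\widetilde W$) coincides exactly with the paper's $S_1$, and then trace against $X_0$, using cyclicity with $\Sigma_K=\sum_{j\ge 0}(A-BKC)^j X_0 {(A-BKC)^\top}^j$ and the symmetry of $P'_K[Z]$ and $\Sigma_K$ to merge the two cross terms into the coefficient $-4$. The only cosmetic difference is that you differentiate the raw Lyapunov equation twice directly, while the paper differentiates its already-collected first-derivative equation; the resulting term-grouping is identical.
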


\begin{proof}
Denote $P'_K[Z]=\frac{d}{d\eta}\Big|_{\eta=0}P_{K+\eta Z}$. From \eqref{eq.lyapunov_equation}, we have
\begin{equation} 
\label{eq.P_derivative}
\begin{aligned}
&P'_K[Z]\\
&=C^\top Z^\top R KC+C^\top K^\top R ZC \\
&\quad- (BZC)^\top P_K(A-BKC) - (A-BKC)^\top P_K BZC\\
&\quad +(A-BKC)^\top P'_K[Z](A-BKC)\\
&=C^\top Z^\top E_K  + E_K^\top ZC\\ &\quad+(A-BKC)^\top P'_K[Z](A-BKC)\\
&= \sum_{j=0}^{\infty}{(A-BKC)^\top}^j(C^\top Z^\top E_K  + E_K^\top ZC)(A-BKC)^j.
\end{aligned}
\end{equation}
Furthermore, its second derivative can be derived as
\begin{equation}   
\label{eq.second_derivative_P}
\begin{aligned}
&\frac{d^2}{d\eta^2}\Big|_{\eta=0}P_{K+\eta Z}\\
&=C^\top Z^\top \big((R+B^TP_KB)ZC-B^\top P'_K[Z](A-BKC)\big)  \\
&\quad + \big((R+B^TP_KB)ZC-B^\top P'_K[Z](A-BKC)\big)^\top ZC\\ 
&\quad-(BZC)^\top P'_K[Z](A-BKC)\\
&\quad-(A-BKC)^\top P'_K[Z]BZC\\
&\quad+(A-BKC)^\top \frac{d^2}{d\eta^2}\Big|_{\eta=0}P_{K+\eta Z}(A-BKC)\\
&=\sum_{j=0}^{\infty}{(A-BKC)^\top}^jS_1(A-BKC)^j,
\end{aligned}
\end{equation}
where
\begin{equation}
\nonumber
\begin{aligned}
S_1:=&2\big(C^\top  Z^\top (R+B^TP_KB)ZC\\
&-(BZC)^\top P'_K[Z](A-BKC)\\
&-(A-BKC)^\top P'_K[Z]BZC\big).
\end{aligned}
\end{equation}
Finally, using \eqref{eq.quadaratic_of_hessian}, we can show that 
\begin{equation}  
\nonumber
\begin{aligned}
\nabla^2J(K)[Z,Z]&={\rm Tr}(\sum_{j=0}^{\infty}{(A-BKC)^\top}^jS_1(A-BKC)^jX_0)\\
&={\rm Tr}(S_1\sum_{j=0}^{\infty}{(A-BKC)^jX_0(A-BKC)^\top}^j)\\
&={\rm Tr}(S_1\Sigma_K)\\
&=2{\rm Tr}(C^\top Z^\top (R+B^TP_KB)ZC \Sigma_K)\\
&\qquad - 4{\rm Tr}((BZC)^\top P'_K[Z](A-BKC) \Sigma_K).
\end{aligned}
\end{equation}
\end{proof}

\subsection{Cost Function Properties}
\label{sec.property}
Given the analytical policy gradient, before moving forward to analyze the performance of gradient-based methods in solving the SOF problem, we first need to establish several key properties of the SOF cost. The intermediate lemmas required by the property analysis are provided in Appendix \ref{appen.intermediate lemma}.

\begin{lemma}[Coercivity]
\label{lemma.coercivity}
The cost function $J(K)$ is coercive in the sense that for any sequence $\{K_i\}_{i=1}^{\infty}\subseteq \mathbb{K}$ 
\begin{equation}
\nonumber
J(K_i) \rightarrow +\infty,\quad {\rm if}\   \|K_i\|\rightarrow +\infty\  {\rm or}\  K_i \rightarrow K \in \partial \mathbb{K}.
\end{equation}
\end{lemma}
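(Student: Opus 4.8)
The plan is to work from a ``dual'' representation of the cost in terms of the state correlation matrix. Writing $J(K)=\mathbb{E}\sum_t\mathrm{Tr}\big((Q+C^\top K^\top RKC)x_tx_t^\top\big)$ and using the definition of $\Sigma_K$, one obtains
\[
J(K) = \mathrm{Tr}\big((Q + C^\top K^\top RKC)\Sigma_K\big).
\]
Because $Q\succ 0$ and $C^\top K^\top RKC\succeq 0$ enter additively, this representation is exactly what separates the two blow-up regimes in the statement. Throughout I would use the already-established fact that $\Sigma_K \succeq X_0 \succeq \mu I \succ 0$ for every $K\in\mathbb{K}$, where $\mu=\sigma_{\min}(X_0)$.

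For the regime $\|K_i\|\to\infty$, I would discard the $Q$-term and keep only the control penalty:
\[
J(K) \ge \mathrm{Tr}(C^\top K^\top RKC\,\Sigma_K) = \mathrm{Tr}\big(RKC\Sigma_K C^\top K^\top\big).
\]
Applying $\Sigma_K\succeq\mu I$, then $R\succeq\lambda_{\min}(R)I$, and finally the full-row-rank assumption on $C$ (which guarantees $\sigma_{\min}(CC^\top)>0$), a short chain of trace/PSD monotonicity inequalities yields $J(K)\ge c\,\|K\|_F^2$ with $c:=\mu\,\lambda_{\min}(R)\,\sigma_{\min}(CC^\top)>0$. Since $\|K\|\le\|K\|_F$, this forces $J(K_i)\to\infty$ whenever $\|K_i\|\to\infty$.

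For the boundary regime $K_i\to K^\ast\in\partial\mathbb{K}$, I would first note that continuity of the spectral radius together with $K^\ast\notin\mathbb{K}$ (as $\mathbb{K}$ is open) forces $\rho(A-BK^\ast C)=1$. Here I keep the $Q$-term and bound $J(K)\ge\lambda_{\min}(Q)\,\mathrm{Tr}(\Sigma_K)$, so it suffices to show $\mathrm{Tr}(\Sigma_{K_i})\to\infty$. I would argue by contradiction: if $\mathrm{Tr}(\Sigma_{K_i})$ stayed bounded along a subsequence, then $\{\Sigma_{K_i}\}$, being positive semidefinite with bounded trace, would admit a convergent sub-subsequence $\Sigma_{K_i}\to\Sigma^\ast\succeq X_0\succ0$, and passing to the limit in \eqref{eq.sigma_lyapunov_equation} gives $\Sigma^\ast = X_0 + (A-BK^\ast C)\Sigma^\ast(A-BK^\ast C)^\top$. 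Taking a unit eigenvector $v$ of $(A-BK^\ast C)^\top$ with eigenvalue $\mu_0$, $|\mu_0|=1$, and evaluating $v^\ast(\cdot)v$ on this identity yields $v^\ast\Sigma^\ast v = v^\ast X_0 v + |\mu_0|^2 v^\ast\Sigma^\ast v$, i.e.\ $v^\ast X_0 v=0$, which contradicts $X_0\succ0$. Hence $\mathrm{Tr}(\Sigma_{K_i})\to\infty$ and $J(K_i)\to\infty$.

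The unbounded-gain case is routine once the dual representation is in hand. I expect the boundary case to be the main obstacle: one must rule out a finite limiting $\Sigma^\ast$, and the cleanest route is the compactness-plus-eigenvector contradiction above, in which positive definiteness of $X_0$ (equivalently $\mu>0$) is precisely what obstructs a bounded solution of the critically stable Lyapunov equation. Care is needed to phrase the limiting argument along subsequences and to confirm $\rho(A-BK^\ast C)=1$ at a boundary point.
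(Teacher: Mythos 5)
Your proof is correct, and while the unbounded-gain half coincides with the paper's argument (both discard the $Q$-term and use $J(K)={\rm Tr}\big((Q+C^\top K^\top RKC)\Sigma_K\big)\ge\mu\,\sigma_{\rm min}(R)\,\sigma_{\rm min}(CC^\top)\|K\|_F^2$, with $\mu>0$ implicitly assumed in both), your treatment of the boundary regime is genuinely different. The paper works with $P_{K_i}$ rather than $\Sigma_{K_i}$: it expands $P_{K_i}=\sum_{j\ge0}{(A-BK_iC)^\top}^j(Q+C^\top K_i^\top RK_iC)(A-BK_iC)^j$, lower-bounds $\|(A-BK_iC)^j\|\ge\rho(A-BK_iC)^j$, and sums the geometric series to get the explicit quantitative bound $J(K_i)\ge\mu\,\sigma_{\rm min}(Q)/\big(1-\rho(A-BK_iC)^2\big)$, which diverges as $\rho(A-BK_iC)\to1$; this is direct and even yields a divergence rate in terms of $1-\rho^2$. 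You instead give a soft compactness argument: assuming ${\rm Tr}(\Sigma_{K_i})$ bounded along a subsequence, you extract $\Sigma_{K_i}\to\Sigma^\ast$, pass to the limit in \eqref{eq.sigma_lyapunov_equation} to get a PSD solution of the Lyapunov equation at the critically stable $A-BK^\ast C$ with $\rho(A-BK^\ast C)=1$, and evaluate along a unimodular eigenvector to force $v^\ast X_0v=0$, contradicting $X_0\succ0$ --- the classical fact that a marginally stable Lyapunov equation with positive definite forcing admits no bounded PSD solution. Your route avoids the series manipulation (and the paper's slightly informal ``$1-\rho^\infty$'' and $\epsilon$-bookkeeping) and generalizes readily, at the price of being purely qualitative --- no rate --- and of invoking Bolzano--Weierstrass; you also correctly handle the delicate points the paper glosses over, namely that $\rho(A-BK^\ast C)=1$ at a boundary point and that the negation of ${\rm Tr}(\Sigma_{K_i})\to\infty$ is exactly the existence of a bounded subsequence. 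One cosmetic remark: since the eigenvector $v$ may be complex, state explicitly that $v^\ast X_0v=({\rm Re}\,v)^\top X_0({\rm Re}\,v)+({\rm Im}\,v)^\top X_0({\rm Im}\,v)>0$ for real $X_0\succ0$, so the contradiction stands.
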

\begin{proof}
From \eqref{eq.cost_in_P}, we can show that
\begin{equation}   
\nonumber
\begin{aligned}
J(K_i) &= \mathbb{E}_{x_0\sim \mathcal{D}}\Big[\sum_{t=0}^{\infty}x_t^\top (Q+C^\top K_i^\top RK_iC)x_t \Big]\\
&={\rm Tr}((Q+C^\top K_i^\top RK_iC)\Sigma_{K_i})\\
&\ge\mu\sigma_{\rm min}(R)\sigma_{\rm min}(CC^\top)\| K_i\|_F^2\\
&\ge\mu\sigma_{\rm min}(R)\sigma_{\rm min}(C)^2\| K_i\|^2,
\end{aligned}
\end{equation}
which directly leads to that $J(K_i)\rightarrow +\infty$ as $\|K_i\|\rightarrow +\infty$.

From \eqref{eq.lyapunov_equation}, $P_K$ can be rewritten as
\begin{equation}   
\label{eq.P_expand}
\begin{aligned}
P_K= \sum_{j=0}^{\infty}{(A-BKC)^\top}^j(Q+C^\top K^\top RKC)(A-BKC)^j.
\end{aligned}
\end{equation}
Therefore, we have
\begin{equation}   
\nonumber
\begin{aligned}
&J(K_i) =\\
&{\rm Tr}(\sum_{j=0}^{\infty}{(A-BK_iC)^\top}^j(Q+C^\top K_i^\top RK_iC)(A-BK_iC)^jX_0)\\
&\ge \mu \sigma_{\rm min}(Q)\sum_{t=0}^{\infty}\|(A-BK_iC)^j\|_F^2\\
&\ge \mu \sigma_{\rm min}(Q)\sum_{t=0}^{\infty}\|(A-BK_iC)^j\|^2\\
&\ge \mu \sigma_{\rm min}(Q)\sum_{t=0}^{\infty}\rho(A-BK_iC)^{2j}\\
&= \mu \sigma_{\rm min}(Q)\frac{1-\rho(A-BK_iC)^{\infty}}{1-\rho(A-BK_iC)^2}.
\end{aligned}
\end{equation}
Since $ \rho(A-BKC)=1$ when $K \in \partial \mathbb{K}$, by continuity of the $\rho(A-BK_iC)$, we have $\rho(A-BK_iC) \rightarrow 1$ as $K_i \rightarrow K \in \partial \mathbb{K}$. Thereby, for every $\epsilon >0$, there exists some $N(\epsilon)\in \mathbb{N}$ such that $ 1 - \rho(A-BK_iC) < \epsilon$ for all $i \ge N(\epsilon)$. That is $1>\rho(A-BK_iC)>1-\epsilon$ for $i\ge N(\epsilon)$. Thereby, $J(K_i)$ is bounded below by
\begin{equation} 
\nonumber
J(K_i) \ge \mu \sigma_{\rm min}(Q)\frac{1}{1-(1-\epsilon)^2}.
\end{equation}
It thus follows that
$J(K_i) \rightarrow +\infty$ as $K_i \rightarrow \partial \mathbb{K}$. This completes the proof of Lemma \ref{lemma.coercivity}.
\end{proof}

With the coercive property in place, we can easily show the compactness on the sublevel set of the SOF cost.
\begin{lemma}[Compactness on the sublevel set]
\label{lemma.compact}
The sublevel set $\mathbb{K}_{\alpha}=\{K\in \mathbb{K}: J(K)\le \alpha\}$ is compact for $\forall \alpha \ge J(K^*)$. 
\end{lemma}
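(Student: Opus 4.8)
The plan is to invoke the Heine--Borel theorem: since $\mathbb{K}_\alpha$ lives in the finite-dimensional space $\mathbb{R}^{m\times d}$, it suffices to show that $\mathbb{K}_\alpha$ is both bounded and closed. Both facts will follow directly from the coercivity established in Lemma \ref{lemma.coercivity}, which is precisely the tool designed to control the two ways a sequence in $\mathbb{K}$ can fail to stay within a sublevel set: the gain blowing up in norm, or the gain approaching the stability boundary.

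For boundedness, I would argue by contradiction. If $\mathbb{K}_\alpha$ were unbounded, there would exist a sequence $\{K_i\}\subseteq \mathbb{K}_\alpha$ with $\|K_i\|\to +\infty$. The first branch of Lemma \ref{lemma.coercivity} then forces $J(K_i)\to +\infty$, contradicting $J(K_i)\le \alpha$. Hence $\mathbb{K}_\alpha$ is bounded.

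For closedness, I would take any sequence $\{K_i\}\subseteq \mathbb{K}_\alpha$ converging to some limit $\bar K \in \mathbb{R}^{m\times d}$; since $\mathbb{K}$ is open (the condition $\rho(A-BKC)<1$ is an open condition), the limit $\bar K$ must lie in the closure $\overline{\mathbb{K}}=\mathbb{K}\cup\partial\mathbb{K}$. The crucial step is to rule out $\bar K\in\partial\mathbb{K}$: were this the case, the second branch of Lemma \ref{lemma.coercivity} would give $J(K_i)\to+\infty$, again contradicting $J(K_i)\le\alpha$. Therefore $\bar K\in\mathbb{K}$, and by the continuity of $J$ on $\mathbb{K}$ (which follows from the differentiability established in Lemma \ref{lemma:gradient}) we obtain $J(\bar K)=\lim_{i\to\infty}J(K_i)\le\alpha$, so $\bar K\in\mathbb{K}_\alpha$. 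This proves $\mathbb{K}_\alpha$ is closed, and combining the two properties with Heine--Borel yields compactness.

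I expect the main obstacle to be this closedness argument, specifically excluding the boundary limit. Because the feasible set $\mathbb{K}$ is open, a sublevel set need not be closed a priori; it is exactly the boundary branch of coercivity that saves the day, guaranteeing that the cost explodes before a sequence can escape to $\partial\mathbb{K}$ at bounded cost. The hypothesis $\alpha\ge J(K^*)$ plays no role in the topological argument itself; it is used only to ensure that $\mathbb{K}_\alpha$ is nonempty (it contains the optimal controller $K^*$), so that the statement is non-vacuous.
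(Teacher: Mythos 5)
Your proof is correct and follows the same overall strategy as the paper---compactness via boundedness plus closedness in the finite-dimensional ambient space, both extracted from the coercivity of Lemma \ref{lemma.coercivity}---but your execution is more self-contained and, on one point, more complete than the paper's. For boundedness the paper simply cites \cite[Proposition 11.12]{bauschke2011convex} (a coercive function has bounded sublevel sets), whereas you re-derive this elementarily by contradiction from the first branch of coercivity; the two are interchangeable. The substantive difference is in the closedness step. The paper disposes of it in one sentence: since $J(K)$ is continuous on $\mathbb{K}$, the set $\mathbb{K}_\alpha$ is closed. Taken literally this is incomplete: because $\mathbb{K}$ is open, continuity of $J$ on $\mathbb{K}$ only makes $\mathbb{K}_\alpha$ closed \emph{relative to} $\mathbb{K}$, and a limit point of $\mathbb{K}_\alpha$ could a priori sit on $\partial\mathbb{K}$, outside the domain of $J$. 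Your argument supplies exactly the missing ingredient: writing $\overline{\mathbb{K}}=\mathbb{K}\cup\partial\mathbb{K}$ (here is where openness of $\mathbb{K}$ is actually used---the limit lies in $\overline{\mathbb{K}}$ regardless), you invoke the boundary branch of coercivity to exclude $\bar K\in\partial\mathbb{K}$, since otherwise $J(K_i)\to+\infty$ would contradict $J(K_i)\le\alpha$, and only then does continuity of $J$ on $\mathbb{K}$ yield $J(\bar K)\le\alpha$. Your closing observation that the hypothesis $\alpha\ge J(K^*)$ plays no topological role and merely guarantees $\mathbb{K}_\alpha\neq\emptyset$ is also accurate. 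In short, your proof makes explicit the boundary-exclusion argument that the paper's one-line closedness claim implicitly relies on.
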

\begin{proof}
With the coercive property in place, by  \cite[Proposition 11.12]{bauschke2011convex}, we know that $\mathbb{K}_{\alpha}$ is bounded.  Since $J(K)$ is continuous on $\mathbb{K}$, the set $\mathbb{K}_{\alpha}$ is also closed, which completes the proof.
\end{proof}
Based on the compactness of the sublevel set, we can finally show that under certain conditions, the decrease of the cost ensures the line segment between two adjacent iterates stays inside $\mathbb{K}_{\alpha} $.

\begin{lemma}[Smoothness on the sublevel set]
\label{lemma.L-smooth}
For any $K\in \mathbb{K}_\alpha$, we have  $\|\nabla^2 J({\rm vec}(K))\|\le L$, where the smoothness constant $L$ is 
\begin{equation}
\nonumber
L=2\|C\|^2\Big(\|R\|+\|B\|^2(1+\frac{2\zeta_1}{\|C\|\|B\|})\frac{\alpha}{\mu}\Big)\frac{\alpha}{\sigma_{\rm min}(Q)},
\end{equation}
with 
\begin{equation}
\label{eq.zeta1}
\zeta_1 = \frac{1}{\sigma_{\rm min}(Q)}\Big((1+\|C\|^2\|B\|^2)\frac{\alpha}{\mu}+\|C\|^2\|R\| \Big)-1.
\end{equation}
This means that for any $K,K'\in \mathbb{K}_\alpha$ satisfying $K+\delta(K'-K)\in \mathbb{K}_{\alpha}$, $\forall \delta\in [0,1]$, the following inequality holds
\begin{equation}
\label{eq.L-smooth}
\begin{aligned}
J(K')&\le J(K)+{\rm Tr}(\nabla J(K)^\top(K'-K))+\frac{L}{2}\|K'-K\|_F^2.
\end{aligned}
\end{equation}
\end{lemma}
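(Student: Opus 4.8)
The plan is to first bound the spectral norm of the Hessian operator uniformly over $\mathbb{K}_\alpha$, and then convert this into the descent inequality \eqref{eq.L-smooth} by a second-order Taylor expansion along the segment. Since the Hessian is a symmetric bilinear form, its spectral norm equals $\sup_{\|Z\|_F=1}|\nabla^2J(K)[Z,Z]|$, so it suffices to bound the quadratic form in \eqref{eq.Hessian formular} for every unit-Frobenius-norm direction $Z$. I would work from the equivalent single-trace expression $\nabla^2J(K)[Z,Z]={\rm Tr}(S_1\Sigma_K)$ derived in the proof of Lemma \ref{lamma:hessian} and bound $|{\rm Tr}(S_1\Sigma_K)|\le\|S_1\|\,{\rm Tr}(\Sigma_K)$, using $\Sigma_K\succeq0$.

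Before estimating $\|S_1\|$, I would record the a priori bounds that hold on the sublevel set. From \eqref{eq.cost_in_P} and $X_0\succeq\mu I$ we get $J(K)\ge\mu\,{\rm Tr}(P_K)\ge\mu\|P_K\|$, hence $\|P_K\|\le\alpha/\mu$; from the first line of the coercivity proof, $J(K)\ge\sigma_{\rm min}(Q){\rm Tr}(\Sigma_K)$, hence ${\rm Tr}(\Sigma_K)\le\alpha/\sigma_{\rm min}(Q)$. Writing $A_K:=A-BKC$, the Lyapunov identity \eqref{eq.sigma_lyapunov_equation} and $X_0\succeq\mu I$ give $\Sigma_K\succeq\mu\sum_{j\ge0}A_K^jA_K^{\top j}$, so $\sum_{j\ge0}\|A_K^j\|^2\le{\rm Tr}(\Sigma_K)/\mu\le\alpha/(\mu\sigma_{\rm min}(Q))$; similarly \eqref{eq.lyapunov_equation} yields $\sigma_{\rm min}(Q)A_K^\top A_K\preceq A_K^\top P_KA_K\preceq P_K$, so $\|A_K\|^2\le\alpha/(\mu\sigma_{\rm min}(Q))$ and $\|P_KA_K\|\le\alpha/\mu$. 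The first block of $S_1$, namely $2\,C^\top Z^\top(R+B^\top P_KB)ZC$, is controlled directly: its norm is at most $2\|C\|^2(\|R\|+\|B\|^2\alpha/\mu)$, which already accounts for the $\|R\|+\|B\|^2\alpha/\mu$ part of $L$.

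The core difficulty is the cross term $-(BZC)^\top P'_K[Z]A_K-A_K^\top P'_K[Z]BZC$, since $P'_K[Z]$ is itself the infinite Lyapunov sum \eqref{eq.derivative_P}. Writing $W:=C^\top Z^\top E_K+E_K^\top ZC$ so that $P'_K[Z]=\sum_jA_K^{\top j}WA_K^j$, I would bound $\|P'_K[Z]A_K\|\le\|W\|\sum_j\|A_K^j\|\,\|A_K^{j+1}\|$ and then apply Cauchy--Schwarz together with the summability estimate above, giving $\sum_j\|A_K^j\|\,\|A_K^{j+1}\|\le\sum_j\|A_K^j\|^2\le\alpha/(\mu\sigma_{\rm min}(Q))$; this is the trick that keeps the estimate linear in $\alpha$ rather than introducing a spurious $\|A_K\|$ factor. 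It then remains to bound $\|W\|\le2\|C\|\,\|E_K\|$, which reduces everything to controlling $\|E_K\|$ on $\mathbb{K}_\alpha$. Using $A_K=A-BKC$ one has $E_K=RKC-B^\top P_KA_K$, whose second part is already bounded by $\|B\|\alpha/\mu$ via $\|P_KA_K\|\le\alpha/\mu$, while the $RKC$ (equivalently $(R+B^\top P_KB)KC$) part is the delicate one. This is exactly the role of the intermediate lemmas in Appendix \ref{appen.intermediate lemma}, whose estimate of $\|E_K\|$ on the sublevel set is what produces the constant $\zeta_1$ in \eqref{eq.zeta1}. I expect this $\|E_K\|$/$P'_K[Z]$ estimate to be the main obstacle; the rest is assembling the pieces so that $\|S_1\|\le2\|C\|^2\big(\|R\|+\|B\|^2(1+2\zeta_1/(\|C\|\|B\|))\alpha/\mu\big)$ and multiplying by ${\rm Tr}(\Sigma_K)\le\alpha/\sigma_{\rm min}(Q)$ to recover $L$.

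Finally, to obtain \eqref{eq.L-smooth}, I would set $g(\delta)=J(K+\delta(K'-K))$ for $\delta\in[0,1]$. Because the segment lies in $\mathbb{K}_\alpha$ by hypothesis, the uniform Hessian bound gives $g''(\delta)=\nabla^2J(K+\delta(K'-K))[K'-K,K'-K]\le L\|K'-K\|_F^2$ for every $\delta$, and the second-order Taylor formula with integral remainder, $g(1)=g(0)+g'(0)+\int_0^1(1-s)g''(s)\,ds$, yields the claimed quadratic upper bound with $g'(0)={\rm Tr}(\nabla J(K)^\top(K'-K))$.
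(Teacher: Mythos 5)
Most of your scaffolding matches the paper and is correct: the reduction of $\|\nabla^2 J({\rm vec}(K))\|$ to $\sup_{\|Z\|_F=1}|\nabla^2J(K)[Z,Z]|$, the a priori bounds $\|P_K\|\le\alpha/\mu$ and ${\rm Tr}(\Sigma_K)\le\alpha/\sigma_{\rm min}(Q)$ (the paper's Lemma \ref{lemma.upper_bound}), the estimate of the first block of $S_1$, and the closing second-order Taylor step (which the paper leaves implicit) are all fine. The genuine gap is exactly at the point you flag as the crux: you propose to control the cross term via $\|W\|\le 2\|C\|\|E_K\|$ plus an $\|E_K\|$ bound from the appendix, asserting that this estimate ``produces the constant $\zeta_1$.'' It does not. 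The paper's proof of Lemma \ref{lemma.L-smooth} never bounds $\|E_K\|$ or $\|KC\|$ at all; instead it proves the semidefinite comparison $P'_K[Z]\preceq\zeta_1 P_K$ by applying the matrix inequality $X^\top Y+Y^\top X\preceq X^\top X+Y^\top Y$ to both pairs of cross terms in $C^\top Z^\top E_K+E_K^\top ZC$, which yields $C^\top Z^\top E_K+E_K^\top ZC\preceq C^\top K^\top RKC+(A-BKC)^\top P_K(A-BKC)+C^\top Z^\top RZC+(BZC)^\top P_KBZC$, and then substitutes the Lyapunov identity \eqref{eq.lyapunov_equation} to replace the first two terms by $P_K-Q$. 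The leftover $-Q$ is precisely why $\zeta_1$ in \eqref{eq.zeta1} carries the ``$-1$'' and contains no $\|A\|$, $\sigma_{\rm min}(R)$, or $\|E_K\|$; comparing the Lyapunov sums \eqref{eq.derivative_P} and \eqref{eq.P_expand} term by term then gives $\sup_{\|Z\|_F=1}\|P'_K[Z]\|\le\zeta_1\|P_K\|\le\zeta_1\alpha/\mu$, after which $q_2$ is closed by a trace Cauchy--Schwarz together with $(A-BKC)\Sigma_K(A-BKC)^\top=\Sigma_K-X_0\preceq\Sigma_K$.

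Your route, as written, cannot deliver the stated constant, for two concrete reasons. First, the appendix estimate you would invoke is Lemma \ref{lemma.K_upper_bound}, whose bound $\|KC\|\le q_3$ involves $\|A\|$ and $\sigma_{\rm min}(R)$ and is derived from the lower bound \eqref{eq:lower_bound_2}, which the paper establishes only for $C\in\mathbb{C}$ (full rank), whereas Lemma \ref{lemma.L-smooth} must hold for rank-deficient $C$ as well. Second, even granting an elementary $\|E_K\|$ bound valid for all $C$ (e.g., ${\rm Tr}(C^\top K^\top RKC\,\Sigma_K)\ge\mu\sigma_{\rm min}(R)\|KC\|_F^2$ gives $\|KC\|\le\sqrt{\alpha/(\mu\sigma_{\rm min}(R))}$ on $\mathbb{K}_\alpha$), assembling your pieces produces a smoothness constant of a different algebraic form --- depending on $\sigma_{\rm min}(R)$ and entering with different powers of $\alpha/\sigma_{\rm min}(Q)$ --- so the final claim $\|S_1\|\le 2\|C\|^2\big(\|R\|+\|B\|^2(1+2\zeta_1/(\|C\|\|B\|))\alpha/\mu\big)$ with $\zeta_1$ as in \eqref{eq.zeta1} would not follow. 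To be clear, your power-sum estimate $\sum_{j\ge0}\|(A-BKC)^j\|^2\le\alpha/(\mu\sigma_{\rm min}(Q))$ is correct and your approach would prove \emph{some} uniform Hessian bound on $\mathbb{K}_\alpha$, hence a valid $L$-smoothness statement with a different constant; but to prove the lemma as stated you need the $P'_K[Z]\preceq\zeta_1P_K$ comparison argument, which is the one idea missing from your plan.
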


\begin{proof}
From \eqref{eq.quadaratic_of_hessian}, applying the Taylor series expansion about direction $Z$, we can show that 
\begin{equation}
\nonumber
\nabla^2J(K)[Z,Z]={\rm vec}(Z)^\top \nabla^2J(\rm vec(K)) {\rm vec}(Z).
\end{equation}
Since $\nabla^2 J({\rm vec}(K))$ is symmetric, one has
\begin{equation}
\label{eq.supremum_of_hessian}
\begin{aligned}
\|\nabla^2 J({\rm vec}(K))\|&=\sup_{\|Z\|_F=1}|{\rm vec}(Z)^\top \nabla^2J(\rm vec(K)) {\rm vec}(Z)|\\
&=\sup_{\|Z\|_F=1}|\nabla^2J(K)[Z,Z]|.
\end{aligned}
\end{equation}

Using~\eqref{eq.Hessian formular}, we further have 
\begin{equation}
\label{eq.upper_bound_of_hessian}
\begin{aligned}
\|\nabla^2 J&({\rm vec}(K))\|\\
\le& 2\sup_{\|Z\|_F=1}|{\rm Tr}(C^\top Z^\top (R+B^TP_KB)ZC \Sigma_K)|\\
&+4\sup_{\|Z\|_F=1}|{\rm Tr}((BZC)^\top P'_K[Z](A-BKC) \Sigma_K)|\\
=:&2q_1+4q_2.
\end{aligned}
\end{equation}
Therefore, we only need to analyze the upper bounds of $q_1$ and $q_2$, separately. Firstly, we have
\begin{equation}
\label{eq.upper_of_q1}
\begin{aligned}
 q_1 \le &\sup_{\|Z\|_F=1}(\|C^\top Z^\top (R+B^TP_KB)ZC\|{\rm Tr}(\Sigma_K))\\
 \le &\sup_{\|Z\|_F=1}(\|C\|^2\|Z\|^2\|R+B^TP_KB\|{\rm Tr}(\Sigma_K))\\
 \le &\sup_{\|Z\|_F=1}( \|C\|^2\|Z\|_F^2(\|R\|+\|B\|^2\|P_K\|){\rm Tr}(\Sigma_K))\\
  \le & \|C\|^2(\|R\|+\|B\|^2\frac{J(K)}{\mu})\frac{J(K)}{\sigma_{\rm min}(Q)},
 \end{aligned}
\end{equation}
where the last step follows from Lemma \ref{lemma.upper_bound} in the appendix.

Next, by the Cauchy-Schwarz inequality, we have
\begin{equation}
\label{eq.upper_of_q2}
\begin{aligned}
q_2
\le &\sup_{\|Z\|_F=1}(\|(BZC)^\top P'_K[Z](A-BKC)\Sigma_K^{1/2}\|_F \|\Sigma_K^{1/2}\|_F)\\
\le& \sup_{\|Z\|_F=1}\big(\|C\|\|Z\|\|B\|\|P'_K[Z]\|\\
&\qquad\qquad\qquad
\|(A-BKC)\Sigma_K^{1/2}\|_F \sqrt{{\rm Tr}(\Sigma_K)}\big)\\
\le& \|C\|\|B\|\sup_{\|Z\|_F=1}(\|P'_K[Z]\|)\\
&\qquad \sqrt{{\rm Tr}((A-BKC)\Sigma_K(A-BKC)^\top)} \sqrt{{\rm Tr}(\Sigma_K)}\\
\le & \|C\|\|B\|{\rm Tr}(\Sigma_K)\sup_{\|Z\|_F=1}\|P'_K[Z]\|\\
\le & \|C\|\|B\|\frac{J(K)}{\sigma_{\rm min}(Q)}\sup_{\|Z\|_F=1}\|P'_K[Z]\|,
\end{aligned}
\end{equation}
where the last step follows from Lemma \ref{lemma.upper_bound}.
Combining \eqref{eq.upper_of_q1} and \eqref{eq.upper_of_q2}, the only thing left is to show the following bound holds:
\begin{equation}
\nonumber
\sup_{\|Z\|_F=1}\|P'_K[Z]\| \le \zeta_1 \|P_K\|,
\end{equation}
where $\zeta_1$ is as given by \eqref{eq.zeta1}.

We will prove the above inequality by showing that $P'_K[Z] \preceq \zeta_1 P_K$. Given \eqref{eq.lyapunov_equation} and \eqref{eq.derivative_P}, if $(C^\top Z^\top E_K  + E_K^\top ZC)\preceq \zeta_1(Q+C^\top K^\top RKC)$ for some $\zeta_1 \in \mathbb{R}^+$, we immediately have $P'_K[Z] \preceq \zeta_1 P_K$. Therefore, it remains to find $\zeta_1$. It follows from \eqref{eq.lyapunov_equation} that
\begin{equation}
\begin{aligned}
&C^\top Z^\top E_K  + E_K^\top ZC \\
&=C^TZ^TRKC+C^TK^TRZC-C^TZ^TB^TP_K(A-BKC)\\
&\quad -(A-BKC)^TP_KBZC\\
&\preceq  C^TZ^TRZC+C^TK^TRKC\\
&\qquad+(A-BKC)^\top P_K(A-BKC) +(BZC)^TP_KBZC\\
&= P_K - Q + C^TZ^TRZC + (BZC)^TP_KBZC\\
&\preceq  \|P_K+C^TZ^TRZC + (BZC)^TP_KBZC\|I-Q\\
&\preceq  (\|P_K\|+\|C\|^2\|R\| + \|C\|^2\|B\|^2\|P_K\|)I-Q\\
&\preceq  \Big((1+\|C\|^2\|B\|^2)\frac{\alpha}{\mu}+\|C\|^2\|R\| \Big)\frac{Q}{\sigma_{\rm min}(Q)}-Q.
\end{aligned}
\end{equation}
Therefore, we proved that $P'_K[Z] \preceq \zeta_1 P_K$, which directly leads to
\begin{equation}
\label{eq.upper_of_q2_new}
\begin{aligned}
q_2\le &\zeta_1\|C\|\|B\|\|P_K\|\frac{J(K)}{\sigma_{\rm min}(Q)}\\
\le & \zeta_1\|C\|\|B\|\frac{J(K)^2}{\mu\sigma_{\rm min}(Q)}.
\end{aligned}
\end{equation}
Combining \eqref{eq.upper_bound_of_hessian}, \eqref{eq.upper_of_q1}, and \eqref{eq.upper_of_q2_new}, for $K\in \mathbb{K}_\alpha$, we finally have 
\begin{equation}
\nonumber
\begin{aligned}
&\|\nabla^2 J({\rm vec}(K))\|\\
&\le2\|C\|^2(\|R\|+\|B\|^2\frac{\alpha}{\mu})\frac{\alpha}{\sigma_{\rm min}(Q)}+4\zeta_1\|C\|\|B\|\frac{\alpha^2}{\mu\sigma_{\rm min}(Q)}\\
&\le  2\|C\|^2\Big(\|R\|+\|B\|^2(1+\frac{2\zeta_1}{\|C\|\|B\|})\frac{\alpha}{\mu}\Big)\frac{\alpha}{\sigma_{\rm min}(Q)}.
\end{aligned}
\end{equation}
\end{proof}

If $J(K)$ is globally $L$-smooth, it is well known that gradient descent can converge to a stationary point with a dimension-free iteration complexity \cite{polyak1963gradient,nesterov1998introductory}. However, Lemma \ref{lemma.L-smooth} only shows that  $\|\nabla^2 J({\rm vec}(K))\|\le L$ for $K\in\mathbb{K}_{\alpha}$. Since the sublevel set $\mathbb{K}_{\alpha}$ can be nonconvex with a non-smooth boundary, or even disconnected \cite{fatkhullin2020CTSOF}, the inequality~\eqref{eq.L-smooth} cannot be directly applied to any $K, K'\in \mathbb{K}_\alpha$. Hence, the convergence analysis is non-trivial.

\begin{lemma}[Performance difference lemma]
\label{lemma.performance_diff}
Suppose $K, K' \in \mathbb{K}$. It holds that:
\begin{equation}
\nonumber
\begin{aligned}
J&(K')-J(K) = 2{\rm Tr}\big(\Sigma_{K'}(K'C-KC)^\top E_K\big)+\\
&+{\rm Tr}\big(\Sigma_{K'}(K'C-KC)^\top (R+B^\top P_K B)(K'C-KC)\big).
\end{aligned}
\end{equation}
\end{lemma}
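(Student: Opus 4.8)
The plan is to mirror the classical performance-difference (value-difference) argument used for the LQR, adapted to the SOF closed loop. First I would fix the trajectory $\{x_t\}$ generated by the gain $K'$, i.e.\ $x_{t+1}=(A-BK'C)x_t$, so that by definition $J(K')=\mathbb{E}_{x_0\sim\mathcal{D}}\sum_{t=0}^\infty x_t^\top(Q+C^\top K'^\top RK'C)x_t$ and $\mathbb{E}_{x_0}\sum_t x_t x_t^\top=\Sigma_{K'}$. Because $K'\in\mathbb{K}$ is stabilizing, $x_t\to 0$, so the value function $V_K(x)=x^\top P_K x$ telescopes along this trajectory,
\[
x_0^\top P_K x_0 = \sum_{t=0}^\infty\big(V_K(x_t)-V_K(x_{t+1})\big).
\]
Substituting $J(K)=\mathbb{E}_{x_0}x_0^\top P_K x_0$ together with this identity into $J(K')-J(K)$ collapses the difference into a single running quadratic form summed along the $K'$ trajectory.

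After substituting $x_{t+1}=(A-BK'C)x_t$ and using $\mathbb{E}_{x_0}\sum_t x_t^\top M x_t=\mathrm{Tr}(M\Sigma_{K'})$, the difference becomes $J(K')-J(K)=\mathrm{Tr}(M\Sigma_{K'})$, where
\[
M=Q+C^\top K'^\top RK'C-P_K+(A-BK'C)^\top P_K(A-BK'C).
\]
The heart of the proof is to simplify $M$. I would introduce the shorthand $A_K:=A-BKC$ and $\Delta:=(K'-K)C$, so that $A-BK'C=A_K-B\Delta$, and eliminate the standalone $Q-P_K$ using the Lyapunov equation \eqref{eq.lyapunov_equation}, namely $Q-P_K=-C^\top K^\top RKC-A_K^\top P_K A_K$. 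Expanding $(A_K-B\Delta)^\top P_K(A_K-B\Delta)$ and $C^\top K'^\top RK'C=(KC+\Delta)^\top R(KC+\Delta)$ then makes the pure $A_K^\top P_K A_K$ and $C^\top K^\top RKC$ terms cancel, leaving only terms linear and quadratic in $\Delta$.

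The main obstacle is this bookkeeping step, where sign errors are easy: after cancellation one is left with $M=\Delta^\top(R+B^\top P_K B)\Delta+\big(C^\top K^\top R-A_K^\top P_K B\big)\Delta+\Delta^\top\big(RKC-B^\top P_K A_K\big)$, and the key observation is that $RKC-B^\top P_K A_K=(R+B^\top P_K B)KC-B^\top P_K A=E_K$ (using $A_K=A-BKC$), while the transpose gives $C^\top K^\top R-A_K^\top P_K B=E_K^\top$. Hence $M=\Delta^\top E_K+E_K^\top\Delta+\Delta^\top(R+B^\top P_K B)\Delta$.

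Finally I would take the trace against $\Sigma_{K'}$. Since $\Sigma_{K'}$, $R$, and $P_K$ are symmetric, the trace of a matrix equals the trace of its transpose, so by cyclicity $\mathrm{Tr}(\Sigma_{K'}E_K^\top\Delta)=\mathrm{Tr}(\Sigma_{K'}\Delta^\top E_K)$ and the two cross terms combine into $2\,\mathrm{Tr}(\Sigma_{K'}(K'C-KC)^\top E_K)$, while the quadratic term is exactly $\mathrm{Tr}(\Sigma_{K'}(K'C-KC)^\top(R+B^\top P_K B)(K'C-KC))$. This yields the claimed identity.
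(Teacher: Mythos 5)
Your proposal is correct and is essentially the paper's own argument: both telescope the value function $V_K(x)=x^\top P_K x$ along the trajectory generated by $K'$ and then use the Lyapunov equation \eqref{eq.lyapunov_equation} to cancel the $Q-P_K$ and $A_K^\top P_K A_K$ terms, leaving exactly $\Delta^\top E_K+E_K^\top\Delta+\Delta^\top(R+B^\top P_K B)\Delta$ with $\Delta=(K'-K)C$. The only cosmetic difference is that the paper carries out this algebra pointwise via the advantage $A_K(x_t,K')$ and sums afterward, whereas you collapse the sum to $\mathrm{Tr}(M\Sigma_{K'})$ first and simplify the matrix $M$; the resulting computation is identical.
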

We provide a brief proof of Lemma \ref{lemma.performance_diff} in Appendix \ref{append.proof}. The performance difference lemma, also known as almost smoothness \cite{fazel2018global}, is the basis for deriving the following gradient domination condition.

\begin{lemma}[Gradient domination]
\label{lemma.gradient_dominance}
Define the set $\mathbb{C}$ as $\mathbb{C}=\{C \in \mathbb{R}^{n\times n}: {\rm rank}(C)=n\}$. Let $K^*$ be the global optimal SOF policy, and $J_s^*$ be the global optimal cost of the corresponding state feedback LQR. Suppose $K$ has finite cost and $\mu > 0$. It holds that \begin{equation}
\label{eq.gradient_dominance_1}
J(K)-J(K^*)\le \frac{\|\Sigma_{K^*}\|{\rm Tr}\big(E_K^\top E_K\big)}{\sigma_{\rm min}(R)}.
\end{equation}
If $C\in \mathbb{C}$, one further has
\begin{equation}
\label{eq.gradient_dominance}
J(K)-J_s^*\le \frac{\|\Sigma_{K^*}\|\|\nabla J(K)\|_F^2}{4\mu^2\sigma_{\rm min}(C)^2\sigma_{\rm min}(R)}, \ \forall C\in \mathbb{C}.
\end{equation}
Moreover, we have the following lower bound 
\begin{equation}
\label{eq:lower_bound_2}
J(K)-J_s^*\ge \frac{\mu{\rm Tr}\big(E_K^\top  E_K\big)}{\|R+B^\top P_K B\|}, \ \forall C \in \mathbb{C}.
\end{equation}
\end{lemma}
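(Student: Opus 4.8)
The plan is to treat the performance difference lemma (Lemma~\ref{lemma.performance_diff}) as the single engine driving all three bounds, specializing the free controller $K'$ to strategic choices. Throughout I abbreviate $W:=R+B^\top P_KB\succ 0$ and $\Delta:=K'C-KC$, so that Lemma~\ref{lemma.performance_diff} reads $J(K')-J(K)=2{\rm Tr}(\Sigma_{K'}\Delta^\top E_K)+{\rm Tr}(\Sigma_{K'}\Delta^\top W\Delta)$. The one manipulation I would use repeatedly is a completion of squares: since $\Sigma_{K'}$ is symmetric, ${\rm Tr}(\Sigma_{K'}\Delta^\top E_K)={\rm Tr}(\Sigma_{K'}E_K^\top\Delta)$, and hence
\[
J(K')-J(K)={\rm Tr}\big(\Sigma_{K'}(\Delta+W^{-1}E_K)^\top W(\Delta+W^{-1}E_K)\big)-{\rm Tr}(\Sigma_{K'}E_K^\top W^{-1}E_K).
\]

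For the first inequality \eqref{eq.gradient_dominance_1}, I would set $K'=K^*$. The completed-square term is nonnegative because $\Sigma_{K^*}\succeq 0$ and $W\succ 0$, so dropping it yields $J(K)-J(K^*)\le{\rm Tr}(\Sigma_{K^*}E_K^\top W^{-1}E_K)$. Two elementary trace estimates then finish it: ${\rm Tr}(\Sigma_{K^*}M)\le\|\Sigma_{K^*}\|\,{\rm Tr}(M)$ for $M\succeq 0$, and ${\rm Tr}(E_K^\top W^{-1}E_K)\le{\rm Tr}(E_K^\top E_K)/\sigma_{\rm min}(W)$, combined with $\sigma_{\rm min}(W)\ge\sigma_{\rm min}(R)$ since $B^\top P_KB\succeq 0$. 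Note this step needs no assumption on $C$ beyond full row rank.

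For \eqref{eq.gradient_dominance}, when $C\in\mathbb{C}$ is invertible the feedback $u=-KCx$ realizes every state-feedback gain, so the optimal SOF cost coincides with the state-feedback optimum, $J(K^*)=J_s^*$. I would then invert the gradient formula of Lemma~\ref{lemma:gradient}: because $\Sigma_K\succeq X_0$ with $\mu>0$ and $C$ is invertible, $\Sigma_KC^\top$ is invertible and $E_K=\tfrac12\nabla J(K)(\Sigma_KC^\top)^{-1}$, whence ${\rm Tr}(E_K^\top E_K)\le\tfrac14\|(\Sigma_KC^\top)^{-1}\|^2\,\|\nabla J(K)\|_F^2$. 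Using $\sigma_{\rm min}(\Sigma_KC^\top)\ge\sigma_{\rm min}(\Sigma_K)\,\sigma_{\rm min}(C)\ge\mu\,\sigma_{\rm min}(C)$ and substituting into \eqref{eq.gradient_dominance_1} gives the stated bound.

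For the lower bound \eqref{eq:lower_bound_2}, I would pick the one-step policy-improvement controller $\hat K$ defined by $\hat KC=KC-W^{-1}E_K$ (well defined since $C$ is invertible), which makes $\Delta=-W^{-1}E_K$ and annihilates the completed-square term, giving exactly $J(\hat K)-J(K)=-{\rm Tr}(\Sigma_{\hat K}E_K^\top W^{-1}E_K)$. Optimality of $K^*$ together with $J(K^*)=J_s^*$ then yields $J(K)-J_s^*\ge J(K)-J(\hat K)={\rm Tr}(\Sigma_{\hat K}E_K^\top W^{-1}E_K)\ge\sigma_{\rm min}(\Sigma_{\hat K})\,{\rm Tr}(E_K^\top E_K)/\|W\|\ge\mu\,{\rm Tr}(E_K^\top E_K)/\|R+B^\top P_KB\|$. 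The main obstacle I anticipate is not the algebra but justifying that this improved gain is stabilizing, $\hat K\in\mathbb{K}$: both the equality from Lemma~\ref{lemma.performance_diff} and the finiteness of $\Sigma_{\hat K}$ are valid only on $\mathbb{K}$. This is the classical fact that a single Newton/policy-iteration step from a stabilizing controller stays stabilizing (Hewer's theorem for the discrete-time Riccati recursion), which I would cite or verify directly, falling back on a continuity argument to handle any boundary edge case.
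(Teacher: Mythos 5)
Your proposal is correct and takes essentially the same route as the paper: Lemma~\ref{lemma.performance_diff} plus completion of squares (yours in $K'C-KC$, the paper's in $K'-K$ via $X=(R+B^\top P_KB)^{-1}E_K\Sigma_{K'}C^\top(C\Sigma_{K'}C^\top)^{-1}$, which coincides with your choice when $C$ is invertible since then $\Sigma_{K'}C^\top(C\Sigma_{K'}C^\top)^{-1}=C^{-1}$), with $K'=K^*$ giving \eqref{eq.gradient_dominance_1}, the gradient formula of Lemma~\ref{lemma:gradient} combined with $\sigma_{\rm min}(\Sigma_KC^\top)\ge\mu\,\sigma_{\rm min}(C)$ giving \eqref{eq.gradient_dominance}, and the improvement step $\hat{K}C=KC-(R+B^\top P_KB)^{-1}E_K$ giving \eqref{eq:lower_bound_2}. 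The one place you go beyond the paper is in your favor: you flag that $\hat{K}\in\mathbb{K}$ needs justification (the paper uses $K'=K-X$ without comment), and your fix is sound, since $\hat{K}C=(R+B^\top P_KB)^{-1}B^\top P_KA$ is exactly one policy-iteration step, which Hewer's theorem guarantees is stabilizing whenever $K$ is.
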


These results can be proved in a similar way to \cite[Lemma 3]{fazel2018global} where $C=I_n$ (see Appendix \ref{appendix.dominance} for details).
The gradient dominance condition indicates that there are no stationary points other than the global minimum, which is crucial to achieve global convergence for policy gradient \cite{polyak1963gradient,lojasiewicz1963topological}. However, when ${\rm rank}(C)<n$, gradient dominance does not hold. Therefore, in the latter case, it is hard to expect anything better than converging to a stationary point.

\begin{lemma}[$M$-Lipschitz continuous Hessian]
\label{lemma.continue_Hessian}
 For any $K$, $K'\in \mathbb{K}_\alpha$ satisfying $K+\delta(K'-K)\in \mathbb{K}_{\alpha}$, $\forall \delta\in [0,1]$, the following inequality holds
\begin{equation}
\nonumber
\begin{aligned}
\|\nabla^2 J(\mathrm{vec}(K))-\nabla^2 J(\mathrm{vec}(K'))\|\le M \|K-K'\|,
\end{aligned}
\end{equation}
where 
\begin{equation}
\nonumber
M =     \frac{2\alpha^2}{\mu\sigma_{\rm min}(Q)}\|B\|\|C\|\big((2\zeta_1+\zeta_2)\|B\|\|C\|+2\zeta_3+\zeta_4\big),
\end{equation}
and $\zeta_{\{1,2,3,4\}}$ are defined in Lemma \ref{lemma.L-smooth} and \ref{lemma.positive_definite}.
\end{lemma}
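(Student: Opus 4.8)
The plan is to follow the strategy of the $L$-smoothness estimate in Lemma \ref{lemma.L-smooth}. Since both $\nabla^2 J(\mathrm{vec}(K))$ and $\nabla^2 J(\mathrm{vec}(K'))$ are symmetric, so is their difference, and its spectral norm equals the supremum of the associated quadratic form over unit-Frobenius-norm directions:
\begin{equation}
\nonumber
\|\nabla^2 J(\mathrm{vec}(K)) - \nabla^2 J(\mathrm{vec}(K'))\| = \sup_{\|Z\|_F = 1}\big|\nabla^2 J(K)[Z,Z] - \nabla^2 J(K')[Z,Z]\big|,
\end{equation}
exactly as in \eqref{eq.supremum_of_hessian}. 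This reduces the matrix inequality to bounding a scalar difference uniformly in $Z$ by $M\|K-K'\|$.

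Next I would substitute the closed-form Hessian \eqref{eq.Hessian formular} at both $K$ and $K'$ and expand the difference by a product-rule (telescoping) decomposition. Writing $F_K := R + B^\top P_K B$ and $D_K := A - BKC$, the first trace term yields, via $F_K\Sigma_K - F_{K'}\Sigma_{K'} = (F_K-F_{K'})\Sigma_K + F_{K'}(\Sigma_K-\Sigma_{K'})$, one piece controlled by $P_K - P_{K'}$ (since $F_K - F_{K'} = B^\top(P_K-P_{K'})B$) and one controlled by $\Sigma_K - \Sigma_{K'}$; the second trace term, being a product of $P'_K[Z]$, $D_K$, and $\Sigma_K$, telescopes into three pieces governed respectively by $P'_K[Z] - P'_{K'}[Z]$, by $D_K - D_{K'} = -B(K-K')C$, and by $\Sigma_K - \Sigma_{K'}$. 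In each piece I would bound the frozen factors by their uniform estimates on the sublevel set, namely $\|P_K\|\le\alpha/\mu$, $\mathrm{Tr}(\Sigma_K)\le\alpha/\sigma_{\rm min}(Q)$, $\sup_{\|Z\|_F=1}\|P'_K[Z]\|\le\zeta_1\alpha/\mu$, and $\mathrm{Tr}(D_K\Sigma_K D_K^\top)\le\mathrm{Tr}(\Sigma_K)$ as used for $q_2$ in Lemma \ref{lemma.L-smooth}, all drawn from Lemma \ref{lemma.upper_bound}.

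The crux is to supply Lipschitz estimates for the three building-block maps $K\mapsto P_K$, $K\mapsto\Sigma_K$, and $K\mapsto P'_K[Z]$ (the last uniformly over $\|Z\|_F=1$), each of the form $\le(\mathrm{const})\cdot\|K-K'\|$. Here the hypothesis that the whole segment $K+\delta(K'-K)$, $\delta\in[0,1]$, stays in $\mathbb{K}_\alpha$ is essential: it keeps $\rho\big(A-B(K+\delta(K'-K))C\big)<1$ uniformly, so the defining Lyapunov series converge uniformly and the derivatives $\tfrac{d}{d\delta}P$, $\tfrac{d}{d\delta}\Sigma$, and $\tfrac{d}{d\delta}P'[Z]$ remain uniformly bounded along the segment; the mean value inequality then converts these derivative bounds into the required Lipschitz estimates. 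The constants $\zeta_2,\zeta_3,\zeta_4$ from Lemma \ref{lemma.positive_definite} are precisely these uniform derivative bounds, while $\zeta_1$ from Lemma \ref{lemma.L-smooth} controls $\|P'_K[Z]\|$ itself. Assembling the five pieces by the triangle inequality and collecting constants then gives $M$; the terms that differentiate through $F_K$ (producing $B^\top(\cdot)B$) or through $D_K$ (producing $B(\cdot)C$) carry an extra factor $\|B\|\|C\|$, which is what separates the group $(2\zeta_1+\zeta_2)\|B\|\|C\|$ from $2\zeta_3+\zeta_4$ inside $M$.

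I expect the main obstacle to be the Lipschitz bound for $K\mapsto P'_K[Z]$. Unlike $P_K$ and $\Sigma_K$, this map depends on $K$ both through the series kernel $D_K = A-BKC$ and through $E_K$ inside the summand $C^\top Z^\top E_K + E_K^\top ZC$ of \eqref{eq.derivative_P}, so its $\delta$-derivative mixes a kernel contribution with an $E_K$ contribution and implicitly requires controlling the second derivative of $P_K$ uniformly in the direction $Z$. This is the delicate estimate that Lemma \ref{lemma.positive_definite} is designed to carry out; once those $\zeta$-bounds are in hand, the remainder is routine norm bookkeeping.
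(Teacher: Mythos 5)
Your opening reduction via \eqref{eq.supremum_of_hessian} is exactly the paper's first step, but from there the paper takes a different route than your endpoint telescoping, and your version has a concrete gap. The paper never forms the difference of the two Hessian formulas \eqref{eq.Hessian formular}: it parametrizes along the segment, sets $g(\delta):=\nabla^2 J(K+\delta(K'-K))[Z,Z]$ (with $\bar K:=K+\delta(K'-K)$, $\Delta K:=K'-K$), and bounds $|g(0)-g(1)|\le\int_0^1\sup_{\|Z\|_F=1}|g'(\delta)|\,\mathrm{d}\delta$ as in \eqref{eq.hessian_diff_calculus}. The mixed third derivative $\tfrac{\partial^3}{\partial\eta^2\partial\delta}\big|_{\eta=0}P_{\bar K+\eta Z}$ is written as a Lyapunov series with summand $S_2$, and cycling the trace against $X_0$ makes $\Sigma_{\bar K}$ appear only as a \emph{frozen} factor in $g'(\delta)={\rm Tr}(S_2\Sigma_{\bar K})$: all $\delta$-variation is carried by $\partial P_{\bar K}/\partial\delta$, $P'_{\bar K}[Z]$, $\partial P'_{\bar K}[Z]/\partial\delta$, and $P''_{\bar K}[Z]$, bounded via Lemma \ref{lemma.positive_definite} by $\zeta_2\|\Delta K\|P_{\bar K}$, $\zeta_1 P_{\bar K}$ (Lemma \ref{lemma.L-smooth}), $\zeta_3\|\Delta K\|P_{\bar K}$, and $\zeta_4 P_{\bar K}$ respectively. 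In particular, the variation of the series kernel $(A-B\bar KC)^j$ is what generates the $P''_{\bar K}[Z]$ cross terms in $S_2$; no Lipschitz estimate for $K\mapsto\Sigma_K$ is ever needed.

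By contrast, your telescoping of \eqref{eq.Hessian formular} at the two endpoints produces two pieces controlled by $\Sigma_K-\Sigma_{K'}$, and \emph{no such estimate exists in the paper}: your identification of ``$\zeta_2,\zeta_3,\zeta_4$'' as uniform bounds on $\tfrac{d}{d\delta}P$, $\tfrac{d}{d\delta}\Sigma$, $\tfrac{d}{d\delta}P'[Z]$ misreads Lemma \ref{lemma.positive_definite} --- $\zeta_4$ bounds $P''_{\bar K}[Z]$, the second derivative of $P$ in the direction $Z$, and nothing in the paper bounds $\partial\Sigma_{\bar K}/\partial\delta$. Your pieces governed by $F_K-F_{K'}$, $P'_K[Z]-P'_{K'}[Z]$, and $D_K-D_{K'}$ do map correctly onto the paper's $\zeta_2$, $\zeta_3$, and $\zeta_1$ groups, but the remaining $\Sigma$-difference pieces play the role that the $\zeta_4$ (kernel-variation) terms play in the paper, and you would have to state and prove a separate Lipschitz bound for $\Sigma_K$ along the segment (e.g.\ by differentiating \eqref{eq.sigma_lyapunov_equation}). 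That bound carries its own constant involving $\gamma$, $\|B\|\|C\|$, and ${\rm Tr}(\Sigma_{\bar K})\le\alpha/\sigma_{\rm min}(Q)$, so assembling your five pieces would yield a Lipschitz--Hessian inequality with a constant structurally different from the stated $M=\tfrac{2\alpha^2}{\mu\sigma_{\rm min}(Q)}\|B\|\|C\|\big((2\zeta_1+\zeta_2)\|B\|\|C\|+2\zeta_3+\zeta_4\big)$; as written, the proposal neither supplies the missing estimate nor recovers the lemma with its claimed constant. (One detail you did get right and the paper uses implicitly in \eqref{eq.third_derivative}: $(A-B\bar KC)\Sigma_{\bar K}(A-B\bar KC)^\top=\Sigma_{\bar K}-X_0\preceq\Sigma_{\bar K}$, the same trick as in the $q_2$ bound, which is how the $\|A-B\bar KC\|$ factors are absorbed.)
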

\begin{proof}
Similar to \eqref{eq.supremum_of_hessian}, since $\nabla^2 J(\mathrm{vec}(K))-\nabla^2 J(\mathrm{vec}(K'))$ is symmetric, we have
\begin{equation}
\nonumber
\begin{aligned}
&\|\nabla^2 J(\mathrm{vec}(K))-\nabla^2 J(\mathrm{vec}(K'))\|\\
&=\sup_{\|Z\|_F=1}|\nabla^2J(K)[Z,Z]-\nabla^2J(K')[Z,Z]|.
\end{aligned}
\end{equation}
By \eqref{eq.quadaratic_of_hessian}, we define
\begin{equation}
\nonumber
g(\delta):=\nabla^2 J(K+\delta(K'-K))[Z,Z],
\end{equation}
and denote 
$\Bar{K}:=K+\delta(K'-K)$, $\Delta K:= K'-K$, and $P''_K[Z]:=\frac{d^2}{d\eta^2}\Big|_{\eta=0}P_{K+\eta Z}$. Then, from \eqref{eq.second_derivative_P}, one has
\begin{equation}  
\nonumber
\begin{aligned}
g'(\delta)&= \mathbb{E}_{x_0\sim \mathcal{D}}\left(x_0^\top \frac{\partial^3}{\partial\eta^2\partial \delta}\Big|_{\eta=0}P_{K+\delta(K'-K)+\eta Z}x_0\right).
\end{aligned}
\end{equation}
By the fundamental theorem of calculus, it follows that
\begin{equation}
\label{eq.hessian_diff_calculus}
\begin{aligned}
\|\nabla^2 J(\mathrm{vec}(K))-\nabla^2& J(\mathrm{vec}(K'))\|\\
&=\sup_{\|Z\|_F=1}|g(0)-g(1)|\\
&=\sup_{\|Z\|_F=1}|\int_{0}^{1}g'(\delta){\rm d}\delta|\\
&\le\int_{0}^{1}\sup_{\|Z\|_F=1}|g'(\delta)|{\rm d}\delta.
\end{aligned}
\end{equation}
Based on \eqref{eq.second_derivative_P}, we can observe that 
\begin{equation}  
\nonumber
\begin{aligned}
&\frac{\partial^3}{\partial\eta^2\partial \delta}\Big|_{\eta=0}P_{K+\delta(K'-K)+\eta Z}\\
&\qquad = \sum_{j=0}^{\infty}{(A-B\bar{K}C)^\top}^jS_2(A-B\bar{K}C)^j,
\end{aligned}
\end{equation}
where 
\begin{equation}   
\nonumber
\begin{aligned}
&S_2:=2C^\top Z^\top B^T\frac{\partial P_{\bar{K}}}{\partial \delta}BZC + 2C^\top Z^\top B^T P'_{\bar{K}}[Z] B\Delta K C\\
& \qquad + 2C^\top \Delta K^\top B^T P'_{\bar{K}}[Z] BZ C\\
&\qquad-2(BZC)^\top\frac{\partial P'_{\bar{K}}[Z]}{\partial \delta}(A-B\bar{K}C) \\
&\qquad-2(A-B\bar{K}C)^\top\frac{\partial P'_{\bar{K}}[Z]}{\partial \delta}(BZC)\\
&\qquad -  (B\Delta KC)^\top P''_{\bar{K}}[Z](A-B\bar{K}C)\\
& \qquad -(A-B\bar{K}C)^\top P''_{\bar{K}}[Z](B\Delta K C).
\end{aligned}
\end{equation}
Then, it follows that 
\begin{equation}  
\begin{aligned}
g'(\delta)&={\rm Tr}(\frac{\partial^3}{\partial\eta^2\partial \delta}\Big|_{\eta=0}P_{K+\delta(K'-K)+\eta Z}X_0)\\
&=2{\rm Tr}(C^\top Z^\top B^T\frac{\partial P_{\bar{K}}}{\partial \delta}BZC \Sigma_{\bar {K}})\\
&\qquad +4{\rm Tr}(C^\top Z^\top B^T P'_{\bar{K}}[Z] B\Delta K C\Sigma_{\bar {K}})\\
&\qquad -4{\rm Tr}((BZC)^\top\frac{\partial P'_{\bar{K}}[Z]}{\partial \delta}(A-B\bar{K}C)\Sigma_{\bar {K}})\\
&\qquad - 2{\rm Tr}((B\Delta KC)^\top P''_{\bar{K}}[Z](A-B\bar{K}C)\Sigma_{\bar {K}}).
\end{aligned}
\end{equation}
Utilizing the results of Lemma \ref{lemma.positive_definite}, we can further show that 
\begin{equation}  
\label{eq.third_derivative}
\begin{aligned}
&\sup_{\|Z\|_F=1}|g'(\delta)|\\
&\le2\zeta_2\|B\|^2\|Z\|^2\|C\|^2\|P_{\bar{K}}\|{\rm Tr}(\Sigma_{\bar {K}})\|\Delta K\|\\
&\qquad +4\zeta_1\|B\|^2\|Z\|\|C\|^2\|P_{\bar{K}}\|{\rm Tr}(\Sigma_{\bar {K}})\|\Delta K\|\\
&\qquad +4\zeta_3\|B\|\|Z\|\|C\|\|A-B\bar{K}C\|\|P_{\bar{K}}\|{\rm Tr}(\Sigma_{\bar {K}})\|\Delta K\|\\
&\qquad +2\zeta_4\|B\|\|C\|\|A-B\bar{K}C\|\|P_{\bar{K}}\|{\rm Tr}(\Sigma_{\bar {K}})\|\Delta K\|\\
&\le2\|B\|\|C\|\big((2\zeta_1+\zeta_2)\|B\|\|C\|+2\zeta_3+\zeta_4\big)\frac{\alpha^2\|\Delta K\|}{\mu\sigma_{\rm min}(Q)}.
\end{aligned}
\end{equation}
By plugging \eqref{eq.third_derivative} into \eqref{eq.hessian_diff_calculus}, we finally complete the proof.
\end{proof}

The Lipschitz continuity of the Hessian can be used to obtain a stronger result of convergence to a local minimum for nonconvex optimization under mild assumptions \cite{nesterov1998introductory}. Besides, this property has also been widely used in \cite{jin2017escape,ge2015escaping,carmon2018accelerated} to achieve efficient strict saddle point escape.

\section{Convergence Analysis}
\label{sec:convergence}
In this section, we prove several novel convergence results of the gradient descent method for SOF LQR. Since $J(K)$ is nonconvex, the properties of the cost function obtained in Section \ref{sec.property} will play a key role in the convergence analysis. 

\subsection{Convergence to Stationary Points}

We consider the following gradient descent update rule
\begin{equation}
\label{eq.pg}
K_{i+1}=K_i-\eta \nabla J(K_i),
\end{equation}
where $K_0 \in \mathbb{K}$. By Lemma \ref{lemma.L-smooth}, if we can ensure that the line segment between $K_i$ and $K_i-\eta \nabla J(K_i)$ is in $\mathbb{K}_{\alpha}$,  the convergence of policy descent \eqref{eq.pg} for SOF can be proved by invoking a similar analysis to the case of global $L$-smoothness.
Before presenting the main theorem, we introduce the following definition for convenience. 
\begin{definition}
For a differentiable function $J(\cdot)$, we say $K$ is a first-order stationary point if $\|\nabla J(K)\|_F=0$; we also say $K$ is an $\epsilon$-first-order stationary point if $\|\nabla J(K)\|_F\le \epsilon$.
\end{definition}

\begin{theorem}
\label{theorem: first_order_convergence}
Suppose $J(K_0)=\alpha$, $\mu>0$, and $L$ is as defined in Lemma \ref{lemma.L-smooth}. If we run gradient descent \eqref{eq.pg} with any step size $\eta \le 1/L$, $J(K_i)$ is monotone decreasing, and the algorithm will output an $\epsilon$-first-order stationary point within the following number of iterations 
\begin{equation}
\nonumber
\frac{2\alpha}{\eta\epsilon^2}.
\end{equation}
Besides, the line segment between $K_i$ and $K_{i+1}$ is in $\mathbb{K}_\alpha$ for any $i\in \mathbb{N}$, denoted as $[K_i,K_{i+1}]\in \mathbb{K}_\alpha$. Furthermore, if $C\in \mathbb{C}$, for $\epsilon_J>0$, the iterate $K_N$ satisfies
$J(K_N)-J_s^* \le \epsilon_J$ given 
\begin{equation}
N\ge \frac{\|\Sigma_{K^*}\|}{2\eta\mu^2\sigma_{\rm min}(C)^2\sigma_{\rm min}(R)}\log (\frac{J(K_0)-J_s^*}{\epsilon_J}).
\end{equation}
\end{theorem}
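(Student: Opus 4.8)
The plan is to prove all three claims---monotone decrease with the stated stationary-point rate, the segment containment $[K_i,K_{i+1}]\subseteq\mathbb{K}_\alpha$, and the linear rate to $J_s^*$---by an induction on $i$ that simultaneously tracks membership in the sublevel set and a quantified per-step decrease. The base case is $K_0\in\mathbb{K}_\alpha$ by the hypothesis $J(K_0)=\alpha$. For the inductive step, assuming $K_i\in\mathbb{K}_\alpha$, I would show that the entire segment $K_i(\delta):=K_i-\delta\eta\nabla J(K_i)$, $\delta\in[0,1]$, remains in $\mathbb{K}_\alpha$; this is precisely what is needed to legitimately apply the descent inequality \eqref{eq.L-smooth} of Lemma \ref{lemma.L-smooth} to the endpoints $K=K_i$, $K'=K_{i+1}$ under the hypothesis on that lemma.

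The crux---and the main obstacle, since $\mathbb{K}_\alpha$ may be nonconvex, nonsmooth, or disconnected---is establishing $K_i(\delta)\in\mathbb{K}_\alpha$ for all $\delta\in[0,1]$ through a continuity argument rather than convexity. I would set $\bar\delta=\sup\{s\in[0,1]:[K_i,K_i(s)]\subseteq\mathbb{K}\text{ and }J(K_i(\delta))\le\alpha\ \forall\delta\in[0,s]\}$. If $\nabla J(K_i)=0$ the step is trivial; otherwise $\frac{d}{d\delta}J(K_i(\delta))\big|_{\delta=0}=-\eta\|\nabla J(K_i)\|_F^2<0$ forces $\bar\delta>0$. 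On $[0,\bar\delta]$ the segment lies in $\mathbb{K}_\alpha$, so applying \eqref{eq.L-smooth} at $K_i(\bar\delta)$ gives
\begin{equation}
\nonumber
J(K_i(\bar\delta))\le J(K_i)-\bar\delta\eta\big(1-\tfrac{L\eta\bar\delta}{2}\big)\|\nabla J(K_i)\|_F^2<J(K_i)\le\alpha,
\end{equation}
where the strict inequality uses $\eta\le 1/L$ and $\bar\delta\le 1$ so that $1-\tfrac{L\eta\bar\delta}{2}\ge\tfrac12>0$. If $\bar\delta<1$, continuity of $J$ then yields $J(K_i(\delta))<\alpha$ on a right-neighborhood of $\bar\delta$, contradicting the definition of the supremum; moreover coercivity (Lemma \ref{lemma.coercivity}) forbids the segment from reaching $\partial\mathbb{K}$ while the cost stays below $\alpha$, so $\mathbb{K}$ is never exited. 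Hence $\bar\delta=1$, the segment is contained, and taking $\delta=1$ gives $J(K_{i+1})\le J(K_i)-\tfrac{\eta}{2}\|\nabla J(K_i)\|_F^2$, which closes the induction and proves both monotonicity and $[K_i,K_{i+1}]\subseteq\mathbb{K}_\alpha$.

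With the per-step decrease in hand I would telescope over $i=0,\dots,N-1$. Using $J\ge 0$ (since $P_K\succeq 0$ and $X_0\succeq 0$), summing yields $\tfrac{\eta}{2}\sum_{i=0}^{N-1}\|\nabla J(K_i)\|_F^2\le J(K_0)-J(K_N)\le\alpha$, so $\min_{0\le i<N}\|\nabla J(K_i)\|_F^2\le 2\alpha/(\eta N)$. Requiring the right-hand side to be at most $\epsilon^2$ gives the dimension-free bound $2\alpha/(\eta\epsilon^2)$ on the number of iterations; note the bound depends on no dimension, only on $\alpha$, $\eta$, $\epsilon$.

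For the global rate under $C\in\mathbb{C}$, I would combine the per-step decrease with the gradient-dominance bound \eqref{eq.gradient_dominance} of Lemma \ref{lemma.gradient_dominance}, which lower-bounds $\|\nabla J(K_i)\|_F^2$ by a positive constant multiple of $J(K_i)-J_s^*$. Substituting produces the contraction $J(K_{i+1})-J_s^*\le(1-\gamma)\big(J(K_i)-J_s^*\big)$ with $\gamma=2\eta\mu^2\sigma_{\rm min}(C)^2\sigma_{\rm min}(R)/\|\Sigma_{K^*}\|$, whence $J(K_N)-J_s^*\le(1-\gamma)^N\big(J(K_0)-J_s^*\big)$. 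Applying $1-\gamma\le e^{-\gamma}$ and solving $e^{-\gamma N}\big(J(K_0)-J_s^*\big)\le\epsilon_J$ for $N$ reproduces the stated count $N\ge\frac{\|\Sigma_{K^*}\|}{2\eta\mu^2\sigma_{\rm min}(C)^2\sigma_{\rm min}(R)}\log\frac{J(K_0)-J_s^*}{\epsilon_J}$, completing the proof.
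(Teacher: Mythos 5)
Your proposal is correct, and the telescoping argument and the gradient-dominance contraction in the second half coincide with the paper's proof; the difference lies in how you establish the key invariance step, namely that the full segment $[K_i, K_i - \eta\nabla J(K_i)]$ stays in $\mathbb{K}_\alpha$ so that \eqref{eq.L-smooth} may be legitimately applied. The paper proves this by inflating the sublevel set: it fixes $\phi\in(0,1)$, finds $\varsigma>0$ with $\|\nabla^2 J(\mathrm{vec}(K))\|\le(1+\phi)L$ on $\mathbb{K}_{\alpha+\varsigma}$, uses compactness of $\mathbb{K}_\alpha$ to get a strictly positive distance $\omega$ to the closed complement $(\mathbb{K}_{\alpha+\varsigma}^o)^c$, and then splits the gradient step into $T$ sub-steps of length $\tau\le\min\{\omega/\|\nabla J(K)\|,\,2/((1+\phi)L)\}$, arguing by induction over sub-steps that each sub-segment lies in $\mathbb{K}_{\alpha+\varsigma}$ and each sub-endpoint falls back into $\mathbb{K}_\alpha$, finally choosing $\eta=T\tau\le 1/L$. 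You instead run a continuation (supremum) argument along the single segment: $\bar\delta>0$ from the strictly negative directional derivative, strict decrease at $\bar\delta$ from \eqref{eq.L-smooth} with $1-L\eta\bar\delta/2\ge 1/2$, extension past $\bar\delta$ from openness of $\mathbb{K}$ and continuity of $J$, and containment of the limit point $K_i(\bar\delta)$ in $\mathbb{K}_\alpha$ from closedness of the sublevel set, which is exactly what coercivity (Lemma \ref{lemma.coercivity} via Lemma \ref{lemma.compact}) supplies. Your route is arguably cleaner: it works directly with the constant $L$ on $\mathbb{K}_\alpha$ and dispenses with the auxiliary quantities $\phi$, $\varsigma$, $\omega$, $\tau$, $T$ and the integrality constraint $\eta=T\tau$, at the cost of being a per-iterate topological argument; the paper's construction, in exchange, produces a uniform quantitative safety margin ($\omega$ and the inflated set $\mathbb{K}_{\alpha+\varsigma}$), a device it reuses in the local-convergence analysis of Theorem \ref{theorem: linear_local_minima}. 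Two cosmetic points: when invoking the endpoint $K_i(\bar\delta)$ you should state explicitly that $K_i(\bar\delta)=\lim_{\delta\uparrow\bar\delta}K_i(\delta)\in\mathbb{K}_\alpha$ because $\mathbb{K}_\alpha$ is closed (your appeal to coercivity is the right reason but is stated loosely), and your final step $1-\gamma\le e^{-\gamma}$ makes rigorous the logarithmic iteration count that the paper asserts without comment.
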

\begin{proof}
Firstly, we define $\mathbb{K}_{\alpha}^o:=\{K\in \mathbb{K}: J(K)< \alpha\}$, and denote its complement as $(\mathbb{K}_{\alpha}^o)^c$, which is closed. From Lemma \ref{lemma.L-smooth}, given $\phi\in(0,1)$, there exists $\varsigma>0$ such that $\|\nabla^2J({\rm vec}(K))\|\le (1+\phi)L$ for $K \in \mathbb{K}_{\alpha+\varsigma}$. It is clear that $\mathbb{K}_{\alpha}\cap(\mathbb{K}_{\alpha+\varsigma}^o)^c=\emptyset$. 

Since $\mathbb{K}_{\alpha}$ is compact by Lemma \ref{lemma.compact}, the distance between $\mathbb{K}_{\alpha}$ and $(\mathbb{K}_{\alpha+\varsigma}^o)^c$, defined as $\omega:=\inf\{\|x-y\|, \forall x\in \mathbb{K}_{\alpha}, y\in (\mathbb{K}_{\alpha+\varsigma}^o)^c\}$, must be strictly positive. Let $\tau \in \mathbb{R}^+ \le \min \{\omega/\|\nabla J(K)\|,2/((1+\phi)L)\}$. It follows from $\tau\leq \omega/\|\nabla J(K)\|$ that the line segment between $K$ and $K-\tau \nabla J(K)$ is in $\mathbb{K}_{\alpha+\varsigma}$. Since $\|\nabla^2J({\rm vec}(K))\|\le (1+\phi)L$, by \eqref{eq.L-smooth}, we can show that
\begin{equation}
\nonumber
\begin{aligned}
J(K-\tau \nabla J(K))&\le J(K)-\tau(1-\frac{(1+\phi)L}{2}\tau)\|\nabla J(K)\|_F^2.
\end{aligned}
\end{equation}
Since $\tau \le 2/((1+\phi)L)$, we have $J(K-\tau \nabla J(K)) \le J(K)$. Thereby, $K-\tau \nabla J(K) \in \mathbb{K}_\alpha$. It is then obvious that the line segment $[K, K-\tau \nabla J(K)]\in\mathbb{K}_{\alpha}$. Then, it follows that segment $[K, K-2\tau \nabla J(K)] \in \mathbb{K}_{\alpha+\varsigma}$. We can apply the same argument based on \eqref{eq.L-smooth}  of Lemma~\ref{lemma.L-smooth} to show that $[K,K-2\tau \nabla J(K)]\in\mathbb{K}_\alpha$ if $2\tau \le 2/((1+\phi)L)$. 

By induction, we can show that the line segment $[K,K-T\tau \nabla J(K)]\in\mathbb{K}_\alpha$ if $T\tau \le 2/((1+\phi)L)$. Let $\eta \le 1/L$, and we can choose $\tau>0$ and $T \in \mathbb{N}^+$ such that $\eta = T\tau \le 2/((1+\phi)L)$. Therefore, $[K,K-\eta \nabla J(K)]\in\mathbb{K}_\alpha$. Applying \eqref{eq.L-smooth} again, we have 
\begin{equation}
\label{eq:monotony_gd}
\begin{aligned}
J(K-\eta \nabla J(K))&\le J(K)-\eta(1-\frac{L}{2}\eta)\|\nabla J(K)\|_F^2\\
&\le J(K)-\frac{\eta}{2}\|\nabla J(K)\|_F^2,
\end{aligned}
\end{equation}
where the last step utilizes the fact that $\eta \le 1/L$. Note that the boundary $1/L$ is chosen because it leads to the fastest convergence rate.

Since $J(K_0)=\alpha$, according to \eqref{eq:monotony_gd}, $K_1$ generated by \eqref{eq.pg} is in $\mathbb{K}_\alpha$. By induction, we can show that
\begin{equation}
\label{eq:monotony}
J(K_{i+1})\le J(K_i)-\frac{\eta}{2}\|\nabla J(K_i)\|_F^2
\end{equation}
holds for any $i$, which also means that $[K_i, K_{i+1}] \in \mathbb{K}_\alpha$ for any $i$. Let us sum \eqref{eq:monotony} for $i=0,\cdots, N$. We can obtain
\begin{equation}
\nonumber
\frac{\eta}{2}\sum_{i=0}^{N}\|\nabla J(K_i)\|_F^2 \le J(K_0) - J(K_{N+1})\le J(K_0) - J(K^*).
\end{equation}
Thereby, it is clear that 
\begin{equation}
\nonumber
\min_{0\le i\le N} \|\nabla J(K_i)\|_F^2 \le  \frac{2(J(K_0) - J(K^*))}{\eta N} \le \frac{2\alpha}{\eta N}.
\end{equation}
Therefore, the policy descent method will output an $\epsilon$-first-order stationary point within the following number of iterations:
\begin{equation}
\nonumber
N\le \frac{2\alpha}{\eta\epsilon^2}.
\end{equation} 

If $C\in\mathbb{C}$, by \eqref{eq:monotony} and \eqref{eq.gradient_dominance}, we further have
\begin{equation}
\nonumber
J(K_{i+1})-J(K_{i}) \le -\frac{2\eta\mu^2\sigma_{\rm min}(C)^2\sigma_{\rm min}(R)}{\|\Sigma_{K^*}\|}(J(K_i)-J_s^*),
\end{equation}
which directly leads to 
\begin{equation}
\nonumber
J(K_{i})-J_s^* \le \big(1-\frac{2\eta\mu^2\sigma_{\rm min}(C)^2\sigma_{\rm min}(R)}{\|\Sigma_{K^*}\|}\big)^i(J(K_0)-J_s^*),
\end{equation}
Therefore, when $C\in\mathbb{C}$, for 
\begin{equation}
\nonumber
N\ge \frac{\|\Sigma_{K^*}\|}{2\eta\mu^2\sigma_{\rm min}(C)^2\sigma_{\rm min}(R)}\log (\frac{J(K_0)-J_s^*}{\epsilon_J}),
\end{equation}
the gradient descent \eqref{eq.pg} enjoys that
$J(K_N)-J_s^* \le \epsilon_J$.
\end{proof}

From Theorem \ref{theorem: first_order_convergence}, given an  initial stabilizing controller, one can ensure that the gradient descent method will stay in the feasible set and lead to monotone policy improvement. In particular, if $C\in \mathbb{C}$, it globally converges to the unique minimum point at a linear rate; if ${\rm rank}(C)<n$, it converges to a stationary point. Our analysis assumes the model parameters ($A$, $B$, $C$, $Q$, $R$) to have analytical characterizations of the convergence rate, which provides a baseline to the model-free settings. In the latter case, existing policy learning techniques, such as the zeroth-order optimization approach, provide effective ways to obtain an unbiased estimation of $\nabla J(K)$ from data \cite{conn2009introduction,nesterov2017random}. The work in \cite{fazel2018global} showed that with enough samples and roll-out length, the zeroth-order optimization approach could estimate the gradient within a desired accuracy. The same can be said of discrete-time SOF as well. 

\subsection{Convergence to Local Minima}

\begin{theorem}
\label{theorem: linear_local_minima}
Let $\omega$ denote the distance between $\mathbb{K}_{\beta}$ and $(\mathbb{K}_{\alpha}^o)^c$. Suppose $\beta < \alpha$, i.e., $\mathbb{K}_\beta \subset \mathbb{K}_\alpha$. Set $L$ and $M$ as described in Lemma \ref{lemma.L-smooth} and \ref{lemma.continue_Hessian}, respectively. Assume the initial point $K_0$ be close enough to a local minimum $K^\ddag \in \mathbb{K}_{\beta}$ with positive definite Hessian, i.e., $l= \lambda_{\rm min}(\nabla^2 J({\rm vec}(K^\ddag))>0$. Define $\bar{r}:=2l/M$. If $r_0=\|K_0-K^\ddag\|_F < \bar{r}$ and
$\bar{r}r_0/(\bar{r}-r_0)\le \omega$, then the gradient descent with a fixed step-size $\eta \le 1/L$ converges with the following
rate: 
\begin{equation}
\nonumber
\|K_i-K^{\ddag}\|_F\le \frac{\bar{r}r_0}{\bar{r}-r_0}\big(\frac{1}{1+\eta l}\big)^i.
\end{equation}
\end{theorem}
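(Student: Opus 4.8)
The plan is to treat the local minimum $K^\ddag$ as the minimizer of a function that is locally strongly convex: the positive definiteness $\nabla^2 J(\mathrm{vec}(K^\ddag)) \succeq l I$ together with the $M$-Lipschitz continuity of the Hessian (Lemma~\ref{lemma.continue_Hessian}) gives a positive lower curvature bound in a neighborhood, while Lemma~\ref{lemma.L-smooth} supplies the upper bound $L$. These two bounds make gradient descent contract the distance $r_i := \|K_i - K^\ddag\|_F$ at every step. The only difficulty peculiar to the (possibly nonconvex, disconnected) SOF landscape is keeping each iterate and each segment $[K^\ddag,K_i]$ inside $\mathbb{K}_\alpha$, where the two lemmas are valid; this is exactly what the hypothesis $\bar r r_0/(\bar r - r_0) \le \omega$ is designed to guarantee, since $\omega$ is the distance from $\mathbb{K}_\beta \ni K^\ddag$ to $(\mathbb{K}_\alpha^o)^c$.

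First I would express the gradient at an iterate as an integral of the Hessian along the segment from $K^\ddag$ to $K_i$. Because $K^\ddag$ is a local minimum with positive definite Hessian it is a stationary point, $\nabla J(\mathrm{vec}(K^\ddag))=0$, so the fundamental theorem of calculus yields $\nabla J(\mathrm{vec}(K_i)) = H_i\,\mathrm{vec}(K_i - K^\ddag)$ with $H_i := \int_0^1 \nabla^2 J(\mathrm{vec}(K^\ddag + t(K_i - K^\ddag)))\,\mathrm{d}t$. The update rule then reads $\mathrm{vec}(K_{i+1}-K^\ddag) = (I - \eta H_i)\,\mathrm{vec}(K_i - K^\ddag)$, hence $r_{i+1} \le \|I - \eta H_i\|\, r_i$. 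This representation is legitimate provided the whole segment lies in $\mathbb{K}_\alpha$, which I establish inside the induction described below.

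Next I would bound the averaged Hessian $H_i$. Lemma~\ref{lemma.L-smooth} gives $H_i \preceq L I$, while Lemma~\ref{lemma.continue_Hessian} combined with $\nabla^2 J(\mathrm{vec}(K^\ddag)) \succeq l I$ gives $\nabla^2 J(\mathrm{vec}(K^\ddag + t(K_i-K^\ddag))) \succeq (l - M t r_i)I$; integrating in $t$ yields $H_i \succeq (l - \tfrac{M}{2}r_i)I$. When $r_i < \bar r = 2l/M$ this lower bound is positive, and for $\eta \le 1/L$ (so $\eta H_i \preceq I$, using $l \le L$) the symmetric matrix $I - \eta H_i$ has eigenvalues in $[\,0,\;1 - \eta(l - \tfrac{M}{2}r_i)\,]$. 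Writing $M/2 = l/\bar r$, this produces the one-step contraction
\[
r_{i+1} \le \Big(1 - \eta l\big(1 - \tfrac{r_i}{\bar r}\big)\Big)\, r_i,
\]
which in particular shows $r_{i+1} \le r_i < \bar r$ and, together with the monotone distance decrease, keeps $r_{i+1} < \omega$; this closes the region-membership half of the induction, so that $[K^\ddag,K_{i+1}]\subseteq \mathbb{K}_\alpha^o$ and both lemmas continue to apply.

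The final step, and the one that produces the precise factor $(1+\eta l)^{-1}$ rather than a looser $(1-\eta l)$, is the change of variables $v_i := r_i/(\bar r - r_i)$, which is increasing on $[0,\bar r)$. Substituting the contraction and simplifying, the cross terms collapse via the identity $(1 - \eta l + \eta l u)(1 + \eta l) = 1 + \eta l u - (\eta l)^2(1 - u) \le 1 + \eta l u$ valid for $u = r_i/\bar r < 1$, giving exactly $v_{i+1} \le v_i/(1+\eta l)$. Telescoping yields $v_i \le v_0 (1+\eta l)^{-i}$, and inverting the substitution, with $r_i = \bar r v_i/(1+v_i) \le \bar r v_i$ and $v_0 = r_0/(\bar r - r_0)$, gives the stated bound $r_i \le \tfrac{\bar r r_0}{\bar r - r_0}(1+\eta l)^{-i}$. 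I expect the main obstacle to be the bookkeeping of the induction that simultaneously maintains membership ($r_i < \omega$, so the segment stays in $\mathbb{K}_\alpha$ and $r_i < \bar r$ keeps the curvature lower bound positive) and the distance decrease; once that scaffold is in place the curvature estimates and the final substitution are essentially mechanical.
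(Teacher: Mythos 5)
Your proposal is correct and takes essentially the same route as the paper: both arguments confine every iterate and segment to the convex ball $\{K:\|K-K^\ddag\|_F\le \bar{r}r_0/(\bar{r}-r_0)\}\subset\mathbb{K}_\alpha$ (via the hypothesis $\bar{r}r_0/(\bar{r}-r_0)\le\omega$) so that Lemmas \ref{lemma.L-smooth} and \ref{lemma.continue_Hessian} hold along all relevant segments. The only difference is that the paper then invokes \cite[Theorem 1.2.4]{nesterov1998introductory} as a black box, whereas you reprove that result inline --- the averaged-Hessian representation, the curvature bounds $(l-\tfrac{M}{2}r_i)I\preceq H_i\preceq LI$, and the substitution $v_i=r_i/(\bar{r}-r_i)$ yielding the $(1+\eta l)^{-1}$ ratio --- which is precisely Nesterov's own argument.
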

\begin{proof}
Given a certain local minimum $K^{\ddag}$, define set $\mathbb{K}^{\ddag}=\{K\in \mathbb{R}^{m\times d}:\|K-K^{\ddag}\|_F\le \bar{r}r_0/(\bar{r}-r_0)\}$. Since both $\mathbb{K}_{\beta}$ and $(\mathbb{K}_{\alpha}^o)^c$ are compact,  suppose $\bar{r}r_0/(\bar{r}-r_0)\le \omega$, it is clear that $\mathbb{K}^{\ddag} \subset \mathbb{K}_{\alpha}$. For any $K,K'\in \mathbb{K}^{\ddag}$, we can show that $K+\delta(K'-K)\in \mathbb{K}^{\ddag} \subset \mathbb{K}_{\alpha}$, $\forall \delta\in [0,1]$. Then, from Lemma \ref{lemma.L-smooth} and \ref{lemma.continue_Hessian}, it is straightforward that for any $K,K'\in \mathbb{K}^{\ddag}$, it holds:
\begin{equation}
\nonumber
\begin{aligned}
\|\nabla J(K')-\nabla J(K')\|\le L \|K'-K\|,
\end{aligned}
\end{equation}
and 
\begin{equation}
\nonumber
\begin{aligned}
\|\nabla^2 J({\rm vec}(K'))-\nabla^2 J({\rm vec}(K))\|\le M \|K'-K\|.
\end{aligned}
\end{equation}
The end of the proof can be completed by directly applying \cite[Theorem 1.2.4]{nesterov1998introductory} because we are able to use $L$-smoothness and $M$-Hessian Lipschitz along the entire descent domain.
\end{proof}

Theorem \ref{theorem: linear_local_minima} implies that gradient descent will converge to the local minimum at a linear rate if the starting point is sufficiently close to one. Note that the above is linear convergence in terms of the sequence and not the function value.

\subsection{Influence of the Initial Distribution}

\begin{proposition}
\label{proposition:distribution}
Denote the stationary points of SOF as $K^{\dagger}$. If $1\le{\rm rank}(C)<n$, $K^{\dagger}$ depends on the initial state distribution $\mathcal{D}$, unless $K^{\dagger}C= K_s^*$, where $K_s^*$ is the optimal control gain of the state feedback LQR (i.e., $C=I_n$).
\end{proposition}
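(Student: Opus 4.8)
The plan is to work directly from the first-order stationarity condition. By Lemma~\ref{lemma:gradient}, $K^\dagger$ is a stationary point if and only if
\begin{equation}
\nonumber
\nabla J(K^\dagger) = 2 E_{K^\dagger}\Sigma_{K^\dagger}C^\top = 0,
\end{equation}
where $\Sigma_{K^\dagger}$ solves the Lyapunov equation \eqref{eq.sigma_lyapunov_equation} and therefore depends on the initial distribution $\mathcal{D}$ only through $X_0 = \mathbb{E}_{x_0\sim\mathcal{D}}x_0 x_0^\top$. The argument then splits according to whether the matrix $E_{K^\dagger}$ vanishes, and I would prove the proposition in its contrapositive form: a stationary point that is independent of $\mathcal{D}$ must satisfy $E_{K^\dagger}=0$, which in turn is equivalent to $K^\dagger C = K_s^*$.

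First I would settle the exceptional case $E_{K^\dagger}=0$. Writing $G := K^\dagger C$ and substituting $(R+B^\top P_{K^\dagger}B)G = B^\top P_{K^\dagger}A$ into the Lyapunov equation \eqref{eq.lyapunov_equation}, the cross terms cancel and $P_{K^\dagger}$ is seen to satisfy the discrete-time algebraic Riccati equation; since $K^\dagger\in\mathbb{K}$ the closed loop $A-BG$ is stable, so $P_{K^\dagger}$ is the stabilizing Riccati solution $P^*$ and $G = (R+B^\top P^*B)^{-1}B^\top P^* A = K_s^*$. The converse is immediate. Hence $E_{K^\dagger}=0 \iff K^\dagger C = K_s^*$, and in this case $\nabla J(K^\dagger)=2E_{K^\dagger}\Sigma_{K^\dagger}C^\top=0$ holds regardless of $\Sigma_{K^\dagger}$, i.e. regardless of $\mathcal{D}$. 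This is precisely the ``unless'' clause.

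Next I would treat the generic case $E_{K^\dagger}\neq 0$ and show the stationarity condition cannot persist under perturbations of $\mathcal{D}$. Fixing the closed loop $A_c := A - BK^\dagger C$ (which does not involve $X_0$), the Lyapunov map $X_0 \mapsto \Sigma_{K^\dagger}$ is linear with inverse $\Sigma \mapsto \Sigma - A_c\Sigma A_c^\top$, so it is a bijection on symmetric matrices; as $\mathcal{D}$ (equivalently $X_0\succ 0$) varies, $\Sigma_{K^\dagger}$ sweeps an open subset of the symmetric positive definite cone. If $K^\dagger$ were stationary for every $\mathcal{D}$, the linear-in-$\Sigma$ identity $E_{K^\dagger}\Sigma C^\top = 0$ would hold on this open set, hence on the whole space of symmetric matrices, and in particular at every rank-one $\Sigma = vv^\top$, giving $(E_{K^\dagger}v)(Cv)^\top = 0$ for all $v$. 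This forces each $v$ into $\ker E_{K^\dagger}\cup\ker C$; since a vector space is not the union of two proper subspaces and $\mathrm{rank}(C)\ge 1$, we would need $E_{K^\dagger}=0$, a contradiction. Therefore some distribution $\mathcal{D}'$ makes $E_{K^\dagger}\Sigma_{K^\dagger}C^\top\neq 0$, so $K^\dagger$ is not stationary for $\mathcal{D}'$ and the stationary point genuinely depends on $\mathcal{D}$. When $\mathrm{rank}(C)=n$, Lemma~\ref{lemma.gradient_dominance} already forces $K^\dagger C = K_s^*$, which is why the nontrivial dependence appears only for $\mathrm{rank}(C)<n$.

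I expect the main obstacle to be the rigorous passage from ``stationary for one $\mathcal{D}$'' to ``fails for some other $\mathcal{D}$''. The delicate points are (i) confirming the Lyapunov operator is invertible, so that $\Sigma_{K^\dagger}$ really ranges over an open set rather than a thin slice, and (ii) the observation that a nonzero linear functional cannot vanish on an open set, which is what lets me replace ``all achievable $\Sigma$'' by ``all symmetric $\Sigma$'' and then specialize to rank-one directions. Everything else, namely the Riccati reduction and the union-of-subspaces fact, is routine once these two structural facts are in hand.
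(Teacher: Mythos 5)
Your proof is correct, and it shares the paper's skeleton --- both work from the stationarity condition $\nabla J(K^\dagger)=2E_{K^\dagger}\Sigma_{K^\dagger}C^\top=0$ of Lemma~\ref{lemma:gradient} and split on whether $E_{K^\dagger}$ vanishes --- but you deviate from the paper in two places, and in both you come out ahead. For the equivalence $E_{K^\dagger}=0 \iff K^\dagger C = K_s^*$, the paper invokes the gradient dominance property (Lemma~\ref{lemma.gradient_dominance} with $C=I_n$), under which the state-feedback stationary point is unique; your substitution of $(R+B^\top P_{K^\dagger}B)K^\dagger C = B^\top P_{K^\dagger}A$ into \eqref{eq.lyapunov_equation} to recover the discrete-time algebraic Riccati equation, combined with uniqueness of the stabilizing solution and stability of $A-BK^\dagger C$, is an equally valid and more self-contained route. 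More importantly, at the crux the paper argues only informally: when $E_{K^\dagger}\neq 0$ it states that one ``cannot ensure'' $E_{K^\dagger}\Sigma'_{K^\dagger}C^\top=0$ for a different $X_0'$, without actually exhibiting a distribution that breaks stationarity. You close that gap rigorously: since $\rho(A-BK^\dagger C)<1$, the Lyapunov map $X_0\mapsto\Sigma_{K^\dagger}$ of \eqref{eq.sigma_lyapunov_equation} is a linear bijection on symmetric matrices (inverse $\Sigma\mapsto\Sigma-A_c\Sigma A_c^\top$), so the achievable $\Sigma_{K^\dagger}$ fill an open subset of the positive definite cone; a linear map vanishing on an open set vanishes identically, and specializing to $\Sigma=vv^\top$ gives $(E_{K^\dagger}v)(Cv)^\top=0$ for all $v$, forcing $\mathbb{R}^n=\ker E_{K^\dagger}\cup\ker C$, which is impossible for two proper subspaces once ${\rm rank}(C)\ge 1$ and $E_{K^\dagger}\neq 0$. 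This produces an explicit $\mathcal{D}'$ under which $K^\dagger$ fails to be stationary --- a strictly stronger conclusion than the paper's ``cannot ensure.'' The two structural facts you flagged as delicate do hold exactly as you anticipated (invertibility of the discrete Lyapunov operator under Schur stability, and the open-set-to-everywhere extension by linearity), so no repairs are needed.
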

\begin{proof}
By \eqref{eq.gradient} in \Cref{lemma:gradient} and \eqref{eq.gradient_dominance} in Lemma \ref{lemma.gradient_dominance}, we have 
\begin{equation}
\label{eq.E_K=0}
\|E_{K^*}\|_F=0, \;\text{for}\; C \in \mathbb{C}.
\end{equation}
From \eqref{eq.sop}, one has $$u_t=-Ky_t=-KCx_t,$$
which means $KC$ can be deemed as a state-feedback gain.

By the gradient dominance property, we know the stationary point $K_s^*$ of state feedback LQR (when $C=I_n$) is unique, it is straightforward that $$\|E_{K}\|_F=0 \;\text{iff}\; KC=K_s^*.$$
Note that if $\text{rank}(C)<n$, such an output feedback $K$ that makes $KC=K_s^*$ hold may not exist.

The classical control theory \cite[Lemma 2]{lee2018primal} shows that \eqref{eq.sigma_lyapunov_equation} has a unique positive definite solution $\Sigma_{K}$ if $K\in \mathbb{K}$ and $X_0\succ 0$. Given two different initial distributions $\mathcal{D}'\neq \mathcal{D}$, it follows from \eqref{eq.sigma_lyapunov_equation} that 
$$\Sigma_{K^{\dagger}}'\neq \Sigma_{K^{\dagger}} \;{\rm if}\; X_0= \mathbb{E}_{x_0\sim \mathcal{D}}x_0x_0^\top \neq X_0'= \mathbb{E}_{x_0\sim \mathcal{D}'}x_0x_0^\top.$$

From \eqref{eq.gradient}, given a stationary point $K^{\dagger}$, we can observe that 
$$\|\nabla J(K^{\dagger})\|_F = 2\|E_{K^{\dagger}}\Sigma_{K^{\dagger}}C^\top\|_F=0.$$ 

However, if $\|E_{K^{\dagger}}\|_F\neq 0$ (i.e., $K^{\dagger}C\neq K_s^*$), we cannot ensure that $\|\nabla J(K^{\dagger})\|_F=\|E_{K^{\dagger}}\Sigma_{K^{\dagger}}'C^\top\|_F=0$ for all possible distribution $\mathcal{D}'$ due to the impact of $\mathcal{D}'$ on $\Sigma_{K^{\dagger}}'$. This indicates that the stationary point of SOF when $C \notin \mathbb{C}$ depends on the initial state distribution $\mathcal{D}$, that is, different initial distribution will lead to a different stationary point, unless $K^{\dagger}C= K_s^*$. 
\end{proof}
Proposition \ref{proposition:distribution} suggests that in practice, the initial state distribution should be chosen to be close to the actual application scenario to minimize the expected cost.

\section{Conclusion}
\label{sec:conclusion}
In this paper, we have analyzed the optimization landscape of the gradient descent method for the static output feedback (SOF)  linear quadratic control. First, we established several important properties of the SOF cost function, such as coercivity, $L$-smoothness, and $M$-Lipschitz  continuous  Hessian. Based on these results, we showed that the gradient descent method converges to the global optimal controller at a linear rate in the fully observed case. Novel results on convergence (and rate of convergence) to stationary points were obtained in the partially observed case. The dependence of the stationary points on the initial state distribution is characterized explicitly. We further demonstrated that under mild assumptions, gradient descent  converges linearly to a local minimum if the starting point is  close to one for partially observed SOF. Our results provided a baseline for evaluating the sample complexities of other model-free gradient descent methods in reinforcement learning, where only estimated gradients are utilized.

\section*{Acknowledgment}
We would like to acknowledge Yang Zheng and Shengbo Eben Li for their valuable suggestions.

\appendix

\subsection{Intermediate Lemmas}
\label{appen.intermediate lemma}
\begin{lemma}
\label{lemma.upper_bound}
The upper bound of $\|P_K\|$ and $\|\Sigma_K\|$ are given by 
\begin{equation}
\nonumber
\|P_K\|\le \frac{J(K)}{\mu}, \qquad\|\Sigma_K\|\le {\rm Tr}(\Sigma_K)\le \frac{J(K)}{\sigma_{\rm min}(Q)}.
\end{equation}
\end{lemma}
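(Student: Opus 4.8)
The plan is to exploit the two equivalent trace representations of the cost that are already available, and sandwich each target matrix between a scalar multiple of the identity and a positive semidefinite factor. For the first bound I would use $J(K)={\rm Tr}(P_K X_0)$ from \eqref{eq.cost_in_P}; for the second I would use the representation $J(K)={\rm Tr}((Q+C^\top K^\top RKC)\Sigma_K)$ that already appeared in the proof of Lemma~\ref{lemma.coercivity}. The only facts needed beyond these are the elementary linear-algebra inequalities that ${\rm Tr}(AB)\ge 0$ whenever $A,B\succeq 0$ (since ${\rm Tr}(AB)={\rm Tr}(A^{1/2}BA^{1/2})\ge 0$), and that $\|S\|=\lambda_{\max}(S)\le {\rm Tr}(S)$ for any $S\succeq 0$.

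For $\|P_K\|$, first observe that $P_K\succeq 0$ (indeed $P_K\succeq Q\succ 0$ by \eqref{eq.P_expand}) and that $X_0\succeq \sigma_{\rm min}(X_0)I=\mu I$. Writing $X_0=(X_0-\mu I)+\mu I$ with $X_0-\mu I\succeq 0$, I would split
\begin{equation}
\nonumber
J(K)={\rm Tr}(P_K X_0)={\rm Tr}\big(P_K(X_0-\mu I)\big)+\mu\,{\rm Tr}(P_K)\ge \mu\,{\rm Tr}(P_K),
\end{equation}
where the dropped term is nonnegative because both factors are positive semidefinite. Since $P_K\succeq 0$ gives $\|P_K\|\le {\rm Tr}(P_K)$, this immediately yields $\|P_K\|\le {\rm Tr}(P_K)\le J(K)/\mu$.

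For the bound on $\Sigma_K$, I would argue symmetrically using the second representation. Because $R\succ 0$ we have $C^\top K^\top RKC\succeq 0$, hence $Q+C^\top K^\top RKC\succeq Q\succeq \sigma_{\rm min}(Q)I$, while $\Sigma_K\succeq 0$ (it solves the Lyapunov equation \eqref{eq.sigma_lyapunov_equation} with $X_0\succeq 0$). The same splitting trick gives
\begin{equation}
\nonumber
J(K)={\rm Tr}\big((Q+C^\top K^\top RKC)\Sigma_K\big)\ge {\rm Tr}(Q\Sigma_K)\ge \sigma_{\rm min}(Q)\,{\rm Tr}(\Sigma_K),
\end{equation}
and then $\|\Sigma_K\|\le {\rm Tr}(\Sigma_K)\le J(K)/\sigma_{\rm min}(Q)$ follows from $\Sigma_K\succeq 0$. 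There is no real obstacle here: the argument is a routine application of the trace/positive-semidefinite inequalities, and the only point requiring care is to justify that the discarded trace terms are genuinely nonnegative, which is exactly the statement that the product of two positive semidefinite matrices has nonnegative trace.
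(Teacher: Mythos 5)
Your proposal is correct and follows essentially the same route as the paper's proof: both lower-bound $J(K)$ via the two trace representations ${\rm Tr}(P_K X_0)\ge\mu\|P_K\|$ and ${\rm Tr}\big((Q+C^\top K^\top RKC)\Sigma_K\big)\ge\sigma_{\rm min}(Q)\,{\rm Tr}(\Sigma_K)\ge\sigma_{\rm min}(Q)\|\Sigma_K\|$. The only difference is that you spell out the elementary positive-semidefinite trace inequalities (the splitting $X_0=(X_0-\mu I)+\mu I$ and $\|S\|\le{\rm Tr}(S)$ for $S\succeq 0$) that the paper leaves implicit, which is fine.
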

\begin{proof}
From \eqref{eq.cost_in_P}, one has
\begin{equation}
\nonumber
J(K)= {\rm Tr}(P_KX_0)\ge \mu\|P_K\|.
\end{equation}
Then, the first claim can be directly obtained. Similarly, $J(K)$ can also be lower bounded by
\begin{equation}
\nonumber
\begin{aligned}
J(K)= &{\rm Tr}((Q+C^\top K^\top R K C )\Sigma_K)\\
\ge& \sigma_{\rm min}(Q){\rm Tr}(\Sigma_K)\\
\ge & \sigma_{\rm min}(Q)\|\Sigma_K\|,
\end{aligned}
\end{equation}
which leads to the second claim.
\end{proof}

\begin{lemma}
\label{lemma.K_upper_bound}
For any $K\in \mathbb{K}_\alpha$, it holds that 
\begin{equation}
\nonumber
\|KC\|\le q_3:=\frac{\sqrt{\|R\|+\|B\|^2 \alpha^2/\mu }}{\sqrt{\mu}\sigma_{\rm min}(R)}+\frac{\|B\| \|A\|\alpha}{\mu\sigma_{\rm min}(R)}.
\end{equation}
\end{lemma}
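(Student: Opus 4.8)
The plan is to solve the defining relation $E_K=(R+B^\top P_K B)KC-B^\top P_K A$ from Lemma~\ref{lemma:gradient} for $KC$ and then bound the two resulting pieces separately. Writing $H_K:=R+B^\top P_K B\succeq R\succ 0$, this matrix is invertible with $\sigma_{\rm min}(H_K)\ge\sigma_{\rm min}(R)$, so $KC=H_K^{-1}(E_K+B^\top P_K A)$ and the triangle inequality gives
\begin{equation}
\nonumber
\|KC\|\le\frac{1}{\sigma_{\rm min}(R)}\big(\|E_K\|+\|B\|\,\|P_K\|\,\|A\|\big).
\end{equation}
The second summand is immediate: Lemma~\ref{lemma.upper_bound} yields $\|P_K\|\le J(K)/\mu\le\alpha/\mu$ on $\mathbb{K}_\alpha$, which already produces the term $\|B\|\|A\|\alpha/(\mu\sigma_{\rm min}(R))$ of $q_3$. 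Everything therefore reduces to a uniform upper bound on $\|E_K\|$.

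The main obstacle is precisely bounding $\|E_K\|$, and the cleanest route is to complete the square in the Lyapunov equation~\eqref{eq.lyapunov_equation}. Substituting $u_t=-KCx_t$ and grouping the $KC$-terms, I would rewrite~\eqref{eq.lyapunov_equation} as
\begin{equation}
\nonumber
P_K=Q+A^\top P_K A-A^\top P_K B H_K^{-1}B^\top P_K A+E_K^\top H_K^{-1}E_K,
\end{equation}
so that $E_K^\top H_K^{-1}E_K=P_K-Q-A^\top\big(P_K-P_K B H_K^{-1}B^\top P_K\big)A$. Since $Q\succ 0$ forces $P_K\succ 0$, the matrix inversion lemma identifies $P_K-P_K B H_K^{-1}B^\top P_K=(P_K^{-1}+BR^{-1}B^\top)^{-1}\succeq 0$, hence $A^\top(\cdot)A\succeq 0$ and therefore $E_K^\top H_K^{-1}E_K\preceq P_K$. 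Using $H_K^{-1}\succeq\|H_K\|^{-1}I$ this gives $\|H_K\|^{-1}E_K^\top E_K\preceq P_K$, i.e.\ $\|E_K\|^2\le\|H_K\|\,\|P_K\|$. (Alternatively, for $C\in\mathbb{C}$ one reads essentially the same estimate off the gradient-domination lower bound~\eqref{eq:lower_bound_2}, $\mathrm{Tr}(E_K^\top E_K)\le\|H_K\|(J(K)-J_s^*)/\mu$; but the completing-the-square argument avoids the full-rank hypothesis, which is important because this lemma feeds the partially observed Hessian-Lipschitz analysis where $\mathrm{rank}(C)<n$ is allowed.)

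Finally I would insert the crude norm bounds $\|H_K\|\le\|R\|+\|B\|^2\|P_K\|\le\|R\|+\|B\|^2\alpha/\mu$ together with $\|P_K\|\le\alpha/\mu$ into $\|E_K\|^2\le\|H_K\|\,\|P_K\|$, and combine the resulting estimate for $\|E_K\|$ with the first display to recover the claimed $q_3$. No single step is deep: the inversion of $H_K$ and the two norm bounds are routine, so the only genuine content is recognizing the completing-the-square identity that converts $\|E_K\|$ into a quantity controlled by $P_K$, and hence by the sublevel value $\alpha$, without appealing to gradient dominance or to $C$ having full row rank.
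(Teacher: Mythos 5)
Your proof is correct, and its key step takes a genuinely different route from the paper's. Both arguments open identically: since $H_K:=R+B^\top P_KB\succeq R\succ 0$, one gets $\|KC\|\le(\|E_K\|+\|B\|\|P_K\|\|A\|)/\sigma_{\rm min}(R)$ and handles the second term via Lemma \ref{lemma.upper_bound}. They diverge on bounding $\|E_K\|$. The paper invokes the gradient-domination lower bound \eqref{eq:lower_bound_2} to get ${\rm Tr}(E_K^\top E_K)\le\|H_K\|\,(J(K)-J_s^*)/\mu\le\|H_K\|\,\alpha/\mu$, whereas you derive the Riccati-residual identity
\begin{equation}
\nonumber
P_K=Q+A^\top P_KA-A^\top P_KBH_K^{-1}B^\top P_KA+E_K^\top H_K^{-1}E_K,
\end{equation}
which (together with the Woodbury identity and $P_K\succ 0$) yields $E_K^\top H_K^{-1}E_K\preceq P_K$ and hence $\|E_K\|^2\le\|H_K\|\|P_K\|\le\|H_K\|\,\alpha/\mu$ --- numerically the same estimate, so you land on exactly the constant in the paper's final display. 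Your route buys something real: \eqref{eq:lower_bound_2} is stated, and in Appendix \ref{appendix.dominance} proved, only for $C\in\mathbb{C}$ (the exact completion of the square there needs $C$ invertible, and $J(K^*)=J_s^*$ likewise requires full rank), yet Lemma \ref{lemma.K_upper_bound} is consumed by Lemma \ref{lemma.positive_definite} and the $M$-Lipschitz-Hessian analysis of Lemma \ref{lemma.continue_Hessian} precisely in the partially observed regime ${\rm rank}(C)<n$. Your completing-the-square argument is rank-free and so closes this minor gap in the paper's chain of citations, as your parenthetical correctly anticipates. One cosmetic remark: your derivation gives $\sqrt{\|R\|\alpha+\|B\|^2\alpha^2/\mu}\,/\big(\sqrt{\mu}\,\sigma_{\rm min}(R)\big)$ for the first summand, which agrees with the last line of the paper's own proof; the $q_3$ displayed in the lemma statement omits the factor $\alpha$ on $\|R\|$, which is evidently a typo in the statement rather than a discrepancy in your argument.
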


\begin{proof}
First, we can observe that
\begin{equation}
\nonumber
\begin{aligned}
&\|KC\|\\
&= \|(R+B^\top P_K B)^{-1}(R+B^\top P_K B)KC \|\\
&\le \|(R+B^\top P_K B)^{-1}\|\|(R+B^\top P_K B)KC \|\\
&\le \frac{1}{\sigma_{\rm min}(R)}\|(R+B^\top P_K B)KC-B^\top P_KA+B^\top P_KA \|\\
& \le \frac{\|E_K\|+\|B^\top P_KA\|}{\sigma_{\rm min}(R)}\\
& \le \frac{\sqrt{{\rm Tr}(E_K^\top E_K)}+\|B^\top P_KA\|}{\sigma_{\rm min}(R)}.
\end{aligned}
\end{equation}

From \eqref{eq:lower_bound_2}, we know that 
\begin{equation}
\nonumber
\begin{aligned}
{\rm Tr}(E_K^\top E_K) &\le \frac{\|R+B^\top P_K B\|}{\mu}(J(K)-J_S^*)\\
&\le \frac{\|R+B^\top P_K B\|}{\mu}J(K).
\end{aligned}
\end{equation}
Thereby, we finally have
\begin{equation}
\nonumber
\begin{aligned}
\|KC\| &\le \frac{\sqrt{\|R+B^\top P_K B\|J(K)}}{\sqrt{\mu}\sigma_{\rm min}(R)}+\frac{\|B^\top P_KA\|}{\sigma_{\rm min}(R)}\\
&\le\frac{\sqrt{\|R\|J(K)+\|B\|^2 \|J(K)\|^2/\mu }}{\sqrt{\mu}\sigma_{\rm min}(R)}+\frac{\|B\| \|A\|J(K)}{\mu\sigma_{\rm min}(R)}\\
&\le\frac{\sqrt{\|R\|\alpha+\|B\|^2 \alpha^2/\mu }}{\sqrt{\mu}\sigma_{\rm min}(R)}+\frac{\|B\| \|A\|\alpha}{\mu\sigma_{\rm min}(R)}.
\end{aligned}
\end{equation}
\end{proof}

\begin{lemma}
\label{lemma.positive_definite}
Define $\gamma:=\max_{K\in \mathbb{K}_{\alpha}} \|A-BKC\|$. Let
\begin{equation}
\nonumber
g(\delta):=\nabla^2 J(K+\delta(K'-K))[Z,Z].
\end{equation}
Denote 
$\Bar{K}:=K+\delta(K'-K)$, $\Delta K:= K'-K$, and $P''_K[Z]:=\frac{d^2}{d\eta^2}\Big|_{\eta=0}P_{K+\eta Z}$. Suppose $K, \Bar{K}\in \mathbb{K}_{\alpha}$, it holds that: $\frac{\partial P_{\bar{K}}}{\partial \delta} \preceq \zeta_2 \|\Delta K\|P_{\bar{K}}$, $\frac{\partial P'_{\bar{K}}[Z]}{\partial \delta} \preceq \zeta_3 \|\Delta K\|P_{\bar{K}}$,  and $P''_{\bar{K}}[Z]\preceq \zeta_4 P_{\bar{K}}$.
The expression of $\zeta_2$, $\zeta_3$, and $\zeta_4$ are expressed as 
\begin{equation}
\nonumber
\zeta_2 = \frac{2}{\sigma_{\rm min}(Q)}\left(\|C\|\|R\|q_3+\gamma\|C\|\|B\|\frac{\alpha}{\mu}\right),
\end{equation}
and
\begin{equation}
\nonumber
\begin{aligned}
\zeta_3 = \frac{2}{\sigma_{\rm min}(Q)}& (\|C\|^2\|R\|+\|C\|\|B\|(\|C\|\|B\|\\
&\qquad\qquad\qquad\qquad+\zeta_1\gamma+\zeta_2\gamma)\frac{\alpha}{\mu}),
\end{aligned}
\end{equation}
and
\begin{equation}
\nonumber
\zeta_4 = \frac{2}{\sigma_{\rm min}(Q)} \left(\|C\|^2\|R\|+\|C\|\|B\|(\|C\|\|B\|+\zeta_1\gamma)\frac{\alpha}{\mu}\right).
\end{equation}
\end{lemma}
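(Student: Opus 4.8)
The plan is to prove all three semidefinite bounds with one unified device, namely the positivity and monotonicity of the Lyapunov series map. For a stabilizing $\bar K \in \mathbb{K}_\alpha$, write $\mathcal{L}_{\bar K}(S) := \sum_{j=0}^{\infty}(A-B\bar K C)^{\top j} S (A-B\bar K C)^j$, which converges since $\rho(A-B\bar K C)<1$ and is an order-preserving positive linear map: $S_1 \preceq S_2$ implies $\mathcal{L}_{\bar K}(S_1)\preceq \mathcal{L}_{\bar K}(S_2)$. By \eqref{eq.P_expand} we have $P_{\bar K} = \mathcal{L}_{\bar K}(Q+C^\top \bar K^\top R \bar K C)$, so the whole proof reduces to writing each of the three matrices as $\mathcal{L}_{\bar K}(S)$ for an explicit source term $S$ and then showing $S \preceq \zeta\,(Q+C^\top \bar K^\top R \bar K C)$. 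Concretely I would use $S \preceq \|S\| I \preceq \tfrac{\|S\|}{\sigma_{\rm min}(Q)} Q \preceq \tfrac{\|S\|}{\sigma_{\rm min}(Q)}(Q+C^\top \bar K^\top R \bar K C)$, so that $\zeta = \|S\|/\sigma_{\rm min}(Q)$ after factoring out $\|\Delta K\|$ where appropriate, together with the uniform estimates $\|P_{\bar K}\|\le \alpha/\mu$ (Lemma \ref{lemma.upper_bound}), $\|\bar K C\|\le q_3$ (Lemma \ref{lemma.K_upper_bound}), $\|A-B\bar K C\|\le \gamma$, and $\sup_{\|Z\|_F=1}\|P'_{\bar K}[Z]\|\le \zeta_1\|P_{\bar K}\|$ from the proof of Lemma \ref{lemma.L-smooth}.

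For the first bound I would observe that, since $\bar K = K+\delta\Delta K$, differentiation in $\delta$ coincides with directional differentiation of $P$ at $\bar K$ in the direction $\Delta K$, i.e. $\partial P_{\bar K}/\partial \delta = P'_{\bar K}[\Delta K]$. Reindexing \eqref{eq.derivative_P} at $\bar K$ with $Z=\Delta K$ identifies the source term $S = C^\top \Delta K^\top E_{\bar K} + E_{\bar K}^\top \Delta K C$, whose symmetric structure gives $\|S\|\le 2\|C\|\,\|\Delta K\|\,\|E_{\bar K}\|$. Rewriting $E_{\bar K} = R\bar K C - B^\top P_{\bar K}(A-B\bar K C)$ and bounding each factor yields $\|E_{\bar K}\|\le \|R\|q_3 + \gamma\|B\|\alpha/\mu$, and dividing by $\sigma_{\rm min}(Q)$ recovers exactly $\zeta_2$.

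The remaining two bounds follow the same template but require differentiating the Lyapunov \emph{recursions} rather than the series. Differentiating the fixed-point form \eqref{eq.P_derivative} in $\delta$, using $\partial(A-B\bar K C)/\partial\delta = -B\Delta K C$, again produces a Lyapunov equation in the dynamics $A-B\bar K C$ whose source involves $\partial E_{\bar K}/\partial\delta = (R+B^\top P_{\bar K}B)\Delta K C - B^\top(\partial P_{\bar K}/\partial\delta)(A-B\bar K C)$; estimating this invokes the already-established first bound $\|\partial P_{\bar K}/\partial\delta\|\le \zeta_2\|\Delta K\|\,\alpha/\mu$, and collecting the cross terms gives $\zeta_3$. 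Similarly, \eqref{eq.second_derivative_P} already exhibits $P''_{\bar K}[Z]=\mathcal{L}_{\bar K}(S_1)$ at $\bar K$, so bounding $\|S_1\|$ via $\|P'_{\bar K}[Z]\|\le \zeta_1\alpha/\mu$ produces $\zeta_4$. The main obstacle is precisely this nesting: the bounds must be proved in the order $\zeta_2 \to \zeta_3$ (with $\zeta_4$ leaning on the earlier $\zeta_1$), and one must carefully verify that each $\delta$-differentiated Lyapunov equation inherits the same stable dynamics $A-B\bar K C$ so that $\mathcal{L}_{\bar K}$ applies, while keeping the bookkeeping of the several Young-type cross terms consistent.
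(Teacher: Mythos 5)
Your proposal is correct and takes essentially the same route as the paper's proof: you identify each derivative ($\partial P_{\bar K}/\partial\delta = P'_{\bar K}[\Delta K]$, $\partial P'_{\bar K}[Z]/\partial\delta$, and $P''_{\bar K}[Z]$) with a Lyapunov series in the stable dynamics $A-B\bar K C$, bound the symmetric source term via $S \preceq \|S\| I \preceq \tfrac{\|S\|}{\sigma_{\rm min}(Q)}Q$ and \eqref{eq.P_expand}, and plug in the same estimates ($\|P_{\bar K}\|\le \alpha/\mu$ from Lemma \ref{lemma.upper_bound}, $\|\bar K C\|\le q_3$ from Lemma \ref{lemma.K_upper_bound}, $\|A-B\bar K C\|\le\gamma$, $\|P'_{\bar K}[Z]\|\le\zeta_1\|P_{\bar K}\|$) in the same dependency order $\zeta_2 \to \zeta_3$, with $\zeta_4$ via $\zeta_1$. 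Your operator $\mathcal{L}_{\bar K}$ simply makes explicit the order-preserving comparison the paper invokes implicitly when passing from the source-term bound to $\preceq \zeta\, P_{\bar K}$.
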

\begin{proof}
First, from \eqref{eq.derivative_P}, it is clear that 
\begin{equation} 
\begin{aligned}
 &\frac{\partial P_{\bar{K}}}{\partial \delta}=\sum_{j=0}^{\infty}{(A-B\bar{K}C)^\top}^j(C^\top {\Delta K}^\top E_{\bar{K}}\\
&\qquad \qquad \qquad \qquad +E_{\bar{K}}^\top \Delta KC)(A-B\bar{K}C)^j.
\end{aligned}
\end{equation}
Then, we can observe that
\begin{equation}
\begin{aligned}
C^\top {\Delta K}^\top &E_{\bar{K}}  + E_{\bar{K}}^\top \Delta KC \\
&=C^T\Delta K^TR\bar{K}C+C^T\bar{K}^TR\Delta K C\\
&\qquad-C^T\Delta K^TB^TP_{\bar{K}}(A-B\bar{K}C)\\
&\qquad-(A-B\bar{K}C)^TP_{\bar{K}}B\Delta KC\\
&\le (2\|C\|\|R\|\|\bar{K}C\|\|\Delta K\|\\
&\qquad+2\|C\|\|B\|\|P_{\bar{K}}\|\|A-B\bar{K}C\|\|\Delta K\|)I\\
&\le 2(\|C\|\|R\|q_3+\gamma\|C\|\|B\|\frac{\alpha}{\mu})\|\Delta K\|\frac{Q}{\sigma_{\rm min}(Q)},
\end{aligned}
\end{equation}
where the last step follows from Lemma \ref{lemma.K_upper_bound}.
Therefore, according to \eqref{eq.P_expand}, we have $\frac{\partial P_{\bar{K}}}{\partial \delta} \preceq \zeta_2 \|\Delta K\|P_{\bar{K}}$.

Next, we will prove that $\frac{\partial P'_{\bar{K}}[Z]}{\partial \delta} \preceq \zeta_3 \|\Delta K\|P_{\bar{K}}$. Based on \eqref{eq.P_derivative}, we get
\begin{equation}   
\frac{\partial P'_{\bar{K}}[Z]}{\partial \delta}= \sum_{j=0}^{\infty}{(A-B\bar{K}C)^\top}^jS_3(A-B\bar{K}C)^j,
\end{equation}
where
\begin{equation}  
\nonumber
\begin{aligned}
&S_3\\
&:=C^\top Z^\top (R+B^TP_{\bar{K}}B)\Delta KC-(BZC)^\top\frac{\partial P_{\bar{K}}}{\partial \delta}(A-B\bar{K}C)  \\
&\quad + C^\top \Delta K^\top  (R+B^TP_{\bar{K}}B) ZC-(A-B\bar{K}C)^\top\frac{\partial P_{\bar{K}}}{\partial \delta}BZC \\ 
&\quad-(B\Delta KC)^\top P'_{\bar{K}}[Z](A-B\bar{K}C)\\
&\quad-(A-B\bar{K}C)^\top P'_{\bar{K}}[Z]B\Delta KC.
\end{aligned}
\end{equation}
We can also show that
\begin{equation}
\nonumber
\begin{aligned}
S_3 &\le 2(\|C\|^2\|Z\|\|R+B^TP_{\bar{K}}B\|\|\Delta K\|\\
&\qquad+\|B\|\|Z\|\|C\|\|A-B\bar{K}C\|\|\frac{\partial P_{\bar{K}}}{\partial \delta}\|\\
&\qquad+\|B\|\|C\|\|A-B\bar{K}C\|\|P'_{\bar{K}}[Z]\|\|\Delta K\|)I\\
&\le 2(\|C\|^2\|R\|+\|C\|^2\|B\|^2\|P_{\bar{K}}\|+\zeta_2\gamma\|B\|\|C\|\|P_{\bar{K}}\|\\
&\qquad+\zeta_1\gamma\|B\|\|C\|\|P_{\bar{K}}\|)\|\Delta K\|I\\
&\le 2\big(\|C\|^2\|R\|+\|C\|\|B\|(\|C\|\|B\|\\
&\qquad\qquad\qquad+\zeta_1\gamma+\zeta_2\gamma)\frac{\alpha}{\mu}\big)\|\Delta K\|\frac{Q}{\sigma_{\rm min}(Q)}.
\end{aligned}    
\end{equation}
Therefore, we get $\frac{\partial P'_{\bar{K}}[Z]}{\partial \delta} \preceq \zeta_3 \|\Delta K\|P_{\bar{K}}$.

Similarly, for $P''_{\bar{K}}[Z]$, from \eqref{eq.second_derivative_P}, we can show that
\begin{equation}
\nonumber
\begin{aligned}
&S_1 \\
&\le 2(\|C\|^2\|Z\|^2\|R+B^TP_{K}B\|\\
&\qquad+\|B\|\|Z\|\|C\|\|A-BKC\|\|P'_{K}[Z]\|)I\\
&\le 2(\|C\|^2\|R\|+\|C\|^2\|B\|^2\|P_{K}\|+\zeta_1\gamma\|B\|\|C\|\|P_{K}\|)I\\
&\le 2\big(\|C\|^2\|R\|+\|C\|\|B\|(\|C\|\|B\|+\zeta_1\gamma)\frac{\alpha}{\mu}\big)\frac{Q}{\sigma_{\rm min}(Q)}.\\
\end{aligned}    
\end{equation}
So, it is clear $P''_{\bar{K}}[Z]\preceq \zeta_4 P_{\bar{K}} $, which completes the proof.
\end{proof}

\subsection{Proof of Lemma \ref{lemma.performance_diff}}
\label{append.proof}
\begin{proof}
Let ${x_t'}$ and ${u_t'}$ be the state and action sequences generated by $K'$, and $c_t'=x_t'^\top Qx_t'+u_t'^\top Ru_t'$. Then, one has
\begin{equation}
\nonumber
\begin{aligned}
J(K')-&J(K) \\
=& \mathbb{E}_{x_0\sim \mathcal{D}}\Big[\sum_{t=0}^{\infty}c_t' - V_K(x_0) \Big]\\
=& \mathbb{E}_{x_0\sim \mathcal{D}}\Big[\sum_{t=0}^{\infty}(c_t'+V_K(x_t')-V_K(x_t')) - V_K(x_0) \Big]\\
=& \mathbb{E}_{x_0\sim \mathcal{D}}\Big[\sum_{t=0}^{\infty}(c_t'+V_K(x_{t+1}')-V_K(x_t')) \Big],
\end{aligned}
\end{equation}
where the last step utilizes the fact that $x_0=x_0'$.

Let $A_K(x_t,K')=c_t+V_K(x_{t+1})-V_K(x_t)|_{u_t=-K'Cx_t}$, which can be expanded as 
\begin{equation}
\nonumber
\begin{aligned}
A_K&(x_t, K') \\
=& x_t^\top(Q + C^\top K'^\top RK'C)x_t\\
& +x_t^\top(A-BK'C)^\top P_K(A-BK'C)x_t - V_K(x_t)\\
=& x_t^\top(Q + (K'C-KC+KC)^\top R(K'C-KC+KC))x_t\\
& +x_t^\top(A-B(K'C-KC+KC))^\top P_K(A\\
&-B(K'C-KC+KC))x_t - V_K(x_t)\\
=&2x_t^\top(K'C-KC)^\top ((R+B^\top P_K B)KC-B^\top P_K A)x_t\\
& +x_t^\top(K'C-KC)^\top (R+B^\top P_K B)(K'C-KC)x_t\\
=&2x_t^\top(K'C-KC)^\top E_K x_t\\
& +x_t^\top(K'C-KC)^\top (R+B^\top P_K B)(K'C-KC)x_t.
\end{aligned}
\end{equation}
Then, we get that
\begin{equation}
\nonumber
\begin{aligned}
J(&K')-J(K) \\
=& \mathbb{E}_{x_0\sim \mathcal{D}}\Big[\sum_{t=0}^{\infty}A_K(x_t', K')  \Big]\\
=& \mathbb{E}_{x_0\sim \mathcal{D}}\Big[\sum_{t=0}^{\infty}\Big(2{\rm Tr}\big(x_t'x_t'^\top(K'C-KC)^\top E_K\big)+\\
&{\rm Tr}\big(x_t'x_t'^\top(K'C-KC)^\top (R+B^\top P_K B)(K'C-KC)\big)\Big)\Big]\\
=& 2{\rm Tr}\big(\Sigma_{K'}(K'C-KC)^\top E_K\big)+\\
&{\rm Tr}\big(\Sigma_{K'}(K'C-KC)^\top (R+B^\top P_K B)(K'C-KC)\big).
\end{aligned}
\end{equation}
\end{proof}

\subsection{Proof of Lemma \ref{lemma.gradient_dominance}}
\label{appendix.dominance}
\begin{proof}
Denote $\mathcal{L}_K=C\Sigma_{K}C^\top$. Let $X=(R+B^\top P_K B)^{-1}E_K \Sigma_{K'}C^\top\mathcal{L}_{K'}^{-1}$.
From Lemma \ref{lemma.performance_diff}, we find that
\begin{equation}
\nonumber
\begin{aligned}
J&(K')-J(K)\\
=& 2{\rm Tr}\big(\Sigma_{K'}(K'C-KC)^\top E_K\big)\\
&+{\rm Tr}\big(\Sigma_{K'}(K'C-KC)^\top (R+B^\top P_K B)(K'C-KC)\big)\\
=& {\rm Tr}\big(\Sigma_{K'}C^\top(\Delta K+X)^\top (R+B^\top P_K B)(\Delta K+X)C\big)\\
&-{\rm Tr}\big(\Sigma_{K'}C^\top\mathcal{L}_{K'}^{-1}C\Sigma_{K'}E_K^\top (R+\\
&\qquad \qquad \qquad \qquad B^\top P_K B)^{-1} E_K\Sigma_{K'}C^\top\mathcal{L}_{K'}^{-1}C\big)\\
\ge&-{\rm Tr}\big(\mathcal{L}_{K'}^{-1}C\Sigma_{K'}E_K^\top (R+ B^\top P_K B)^{-1} E_K\Sigma_{K'}C^\top\big),
\end{aligned}
\end{equation}
where $\Delta K=K'-K$  and the equality holds when $K'=K-X$.

Then, one has
\begin{equation}
\label{eq.gradient_dominance_12}
\begin{aligned}
&J(K)-J(K^*)\\
&\le {\rm Tr}\big(\mathcal{L}_{K^*}^{-1}C\Sigma_{K^*}E_K^\top (R+B^\top P_K B)^{-1} E_K\Sigma_{K^*}C^\top\big)\\
&\le\|\Sigma_{K^*}C^\top\mathcal{L}_{K^*}^{-1}C\Sigma_{K^*}\|{\rm Tr}\big( E_K^\top (R+B^\top P_K B)^{-1} E_K \big)\\
&\le\|\Sigma_{K^*}C^\top\mathcal{L}_{K^*}^{-1}C\|\|\Sigma_{K^*}\|{\rm Tr}\big( E_K^\top (R+B^\top P_K B)^{-1} E_K \big)\\
&\le \frac{\|\Sigma_{K^*}\|{\rm Tr}\big(E_K^\top E_K\big)}{\sigma_{\rm min}(R)}.
\end{aligned}
\end{equation}
From \eqref{eq.gradient}, it follows that 
\begin{equation}  
\label{eq.gradient_dominance_22}
\begin{aligned}
\|\nabla J(K))\|_F^2&= 4{\rm Tr}(C\Sigma_KE_K^\top E_K\Sigma_KC^\top)\\
&\ge 4\mu^2\sigma_{\rm min}(C)^2{\rm Tr}(E_K^\top E_K),\  \forall C \in \mathbb{C}.
\end{aligned}
\end{equation}

By \eqref{eq.gradient_dominance_12} and \eqref{eq.gradient_dominance_22}, it holds that 
\begin{equation}
\label{eq.gradient_dominance_proof}
J(K)-J(K^*)\le \frac{\|\Sigma_{K^*}\|\|\nabla J(K))\|_F^2}{4\mu^2\sigma_{\rm min}(C)^2\sigma_{\rm min}(R)}, \ \forall C\in \mathbb{C}.
\end{equation}

Suppose $K'$ satisfies that $K'=K-X$, then we get
\begin{equation}
\nonumber
\begin{aligned}
J(K)&-J(K^*)\\
&\ge J(K)-J(K')\\
&={\rm Tr}\big(\mathcal{L}_{K'}^{-1}C\Sigma_{K'}E_K^\top (R+B^\top P_K B)^{-1} E_K\Sigma_{K'}C^\top\big).
\end{aligned}
\end{equation}
Therefore, we further have 
\begin{equation}
\label{eq:lower_bound_22}
J(K)-J(K^*)\ge \frac{\mu{\rm Tr}\big(E_K^\top  E_K\big)}{\|R+B^\top P_K B\|}, \ \forall C \in \mathbb{C}.
\end{equation}

In addition, when $C\in \mathbb{C}$, since we can always identity the state $x$ by $x= (C^\top C)^{-1}Cy$, it is clear that $J(K^*)=J_s^*$ for every $C\in \mathbb{C}$. By replacing $J(K^*)$ in \eqref{eq.gradient_dominance_proof} and \eqref{eq:lower_bound_22} with $J_s^*$, we finally complete the proof.
\end{proof}

\bibliographystyle{ieeetr}
\bibliography{ref}

\end{document}